







\documentclass[twocolumn]{autart}    

\usepackage{cite}
\usepackage{graphicx}      
\usepackage{psfrag}
\usepackage{amsfonts}
\usepackage{amssymb}
\usepackage{amsmath}
\usepackage{enumerate}
\usepackage{appendix}
\usepackage{psfrag,epsfig,graphicx,ae}
\usepackage{color}

\interdisplaylinepenalty=2500

\usepackage{verbatim}


\newcounter{teocount}
\newcounter{propcount}
\newcounter{corcount}
\newcounter{remcount}
\newcounter{defcount}

\newtheorem{remm}[remcount]{Remark}

\newtheorem{definition}[defcount]{Definition}
\newtheorem{proposition}[propcount]{Proposition}
\newtheorem{theorem}[teocount]{Theorem}
\newtheorem{corollary}[corcount]{Corollary}

\newtheorem{exx}{Example}

\newenvironment{remark}{\begin{remm}\rm }{\hfill \hspace*{1pt} \hfill $\lrcorner$\end{remm}}

\newenvironment{proof}{{\em Proof. }}{\hfill \hspace*{1pt}
\hfill $\blacksquare$}

\newcommand\real{\ensuremath{{\mathbb R}}}

\newcommand{\R}{{\mathbb R}}

\newcommand{\cA}{{\mathcal A}}
\newcommand{\cB}{{\mathcal B}}
\newcommand{\cC}{{\mathcal C}}
\newcommand{\cD}{{\mathcal D}}
\newcommand{\cE}{{\mathcal E}}

\newcommand{\cI}{{\mathcal I}}

\newcommand{\cM}{{\mathcal M}}

\newcommand{\cS}{{\mathcal S}}

\newcommand{\cX}{{\mathcal X}}


\begin{document}
\begin{frontmatter}


\title{Region of Attraction Estimation Using Invariant Sets and Rational Lyapunov Functions\thanksref{footnoteinfo}} 



\thanks[footnoteinfo]{ At the time of writing G. Valmorbida was  affiliated to the Department of Engineering Science and  Somerville College, University of Oxford, Oxford, U.K. J. Anderson is funded via a Junior Research Fellowship from St. John's College, University of Oxford, Oxford, U.K.  Work partly supported by EPSRC grant EP/J010537/1.}

\author[Gif]{Giorgio Valmorbida}\ead{giorgio.valmorbida@l2s.centralesupelec.fr},               
\author[Oxford]{James Anderson}\ead{james.anderson@eng.ox.ac.uk},  
\address[Gif]{Laboratoire des Signaux et Syst\`emes, CentraleSup\'elec, CNRS, Univ. Paris-Sud, Universit\'e Paris-Saclay, 3~Rue~Joliot-Curie, Gif sur Yvette 91192, France. \\[-.6cm]} 
\address[Oxford]{Department of Engineering Science,  University of Oxford, Parks Road, Oxford, OX1 3PJ, UK.\\[-.6cm]}             


\begin{abstract}                
This work addresses the problem of estimating the region of attraction (RA) of equilibrium points of nonlinear dynamical systems. The estimates we provide are given by positively invariant sets which are not necessarily defined by level sets of a Lyapunov function. Moreover, we present conditions for the existence of Lyapunov functions linked to the positively invariant set formulation we propose. Connections to fundamental results on estimates of the RA are presented and support the search of Lyapunov functions of a rational nature. We then restrict our attention to systems governed by polynomial vector fields and  provide an algorithm that is guaranteed to enlarge the estimate of the RA at each iteration. 
\end{abstract}

\begin{keyword}
Estimates of Region of Attraction, Polynomial systems, Invariant sets, Sum-of-squares.
\end{keyword}

\end{frontmatter}

\section{Introduction}\label{sec:intro}

The problem of computing the region of attraction~(RA) of asymptotically stable equilibria, or inner estimates to this set (ERA)~\cite{CHW88}, is central in several applications and its relevance is immediately clear for many practical nonlinear systems for which we can only guarantee local properties of operating points.

With a converse Lyapunov theorem \cite[Theorem 19]{Zub64}, Zubov answered the question ``\textit{[...] Is it possible, with the help of the Lyapunov function  to find a region of variation of the initial values $x_0$ such that $\|\phi(t, x_0)\|\rightarrow 0 \quad \mbox{as} \quad t \rightarrow \infty$ ?}'' \cite[p.3]{Zub64}. The theorem states that if $\mathcal{S}$ \emph{is} the RA of an equilibrium then the existence of a Lyapunov function (LF) satisfying some conditions on such a set $\mathcal{S}$ is \textit{necessary and sufficient}. However, computing the  LF  and the exact RA following Zubov's theorem requires the solution of a partial differential equation, which is difficult to obtain in all but simple cases. 
However, \textit{local} solutions (in a compact set around the equilibrium point) to the conditions can be obtained more easily and yield ERAs for the equilibrium point of interest. In this context, a method to approximate solutions to the conditions of \cite[Theorem 19]{Zub64} is obtained with a series expansion of the LF  \cite[p.91]{Zub64} and is now referred to as \textit{Zubov's Method}.



In \cite{VV85}, Zubov's theorem was modified to consider Lyapunov functions mapping $\R^n$ to $\R_{\geq 0}$ (the original result is stated in terms of a map from $\R^n$ to the interval $[-1,0]$).  One of the conditions in \cite{VV85} imposes that the LF $V(x)$ satisfies $V(x) \rightarrow \infty$ whenever $x \rightarrow \partial \mathcal{S}$ (the boundary of the RA) or whenever $\|x\|~\rightarrow~\infty$. Such a property is described by the observation that \textit{``the candidate must in effect `blow up' near the boundary of the domain of attraction''.} These functions were called \textit{maximal Lyapunov functions} (MLFs).  One of the key observations was that rational functions could be used to approximate MLFs and therefore be used to obtain estimates of the RA.  As a matter of fact, the class of rational functions of the form $V(x) = \frac{V_N(x)}{V_D(x)}$
where $V_N$ and $V_D$ are polynomials, were considered as LF candidates in the algorithm proposed in \cite{VV85} with the boundary of the ERA characterised by the set $\{x\in \R^n \mid V_D(x) = 0\}$.

At this point, for the sake of clarity, it is important to distinguish between two similar sounding yet very different objects: a maximal Lyapunov function (MLF) and a maximal Lyapunov set (MLS). An MLF is a Lyapunov function which satisfies a strict set of conditions (cf. Definition~\ref{def:MLF} in Section~\ref{sec:main}). In contrast, a MLS is defined as the largest level set of a \emph{given} LF contained in a specified set. Computing the MLS is of course of interest since one might wish to compute the best ERA achievable for a given Lyapunov function~\cite{Che04a,Che13}. Further to the choice of the class of the LF, conservativeness is introduced by imposing the level sets of the Lyapunov function to be the ERA, as observed in \cite[p.320]{Kha02}  \textit{``Estimating the region of attraction by $\Omega_c = \{x|V(x) \leq c\}$ is simple but usually conservative. According to LaSalles's theorem [...] we can work with any compact set $\Omega \subset D$ provided we can show that $\Omega$ is positively invariant.''}  The statement highlights the fact that contractiveness of the function defining the ERA is restrictive.

In recent years, sufficient conditions for local stability analysis, requiring invariance and contractiveness of a set led to numerical methods for the estimation of the RA with polynomial Lyapunov functions~\cite{TA08,TP09,TPSB10}.
These methods rely on the solution of non-convex sum-of-squares (SOS) constraints constructed with the \textit{Positivstellensatz} \cite[Theorem 2.14]{Las09}. The solutions to these problems require a coordinate-wise search since the non-convex nature results from the fact that some polynomial variables appear multiplying the Lyapunov function which is itself a variable.  For a detailed description of sum-of-squares methods for RA estimation the reader is referred to \cite{Che11}. For the case of a given LF, the computation of the MLS was pursued in \cite{Che13}. In \cite{HenL12} the theory of moments is used to estimate the RA of uncertain polynomial systems.  We also find in the literature numerical methods exploiting  topological properties of the boundary of the RA requiring the computation of trajectories and equilibrium points. However the complexity of such methods has restricted them to 2-dimensional examples~\cite{CHW88}. Recently, in \cite{WanLC11}, set advection methods are described for polynomial systems.

 In this paper we derive conditions based on Lyapunov stability results that guarantee that trajectories initiated from an \emph{positively invariant set} converge to a level set of the LF which is contractive and invariant therefore guaranteeing such a postively invariant set to be an ERA. In addition to the positively invariant estimates, we present conditions to obtain LF certificates of a specific form which specializes to rational functions in case of polynomial data. We then propose a numerical method based on the solution of SOS constraints for the case of polynomial systems and estimates in the form of semi-algebraic sets (sets defined by polynomial constraints).  The work in this paper extends the work of \cite{ValA14} and connects the concept of \textit{maximal Lyapunov functions} \cite{VV85} to polynomial optimization techniques based on sum-of-squares programming. To the best of the authors knowledge this is the first work to offer a theoretical link between maximal Lyapunov functions, which completely characterise the ERA (and can be approximated to arbitrary accuracy by rational functions) and sum-of-squares methods for rational LF construction. Note that rational Lyapunov functions were considered in \cite{Che13} to obtain MLSs. 
 
The paper is organised as follows: We  present some definitions and the problem statement in Section \ref{sec:prelim} and describe the main theoretical results in Section \ref{sec:main}. Narrowing our attention to systems described by polynomial vector fields we describe a computational method for constructing ERAs based on sum-of-squares programming in Section \ref{sec:ERAviaLyap}  which is illustrated by numerical examples in Section \ref{sec:examples}.

\section{Preliminaries}
\label{sec:prelim}

Let $\R, \R_{\ge 0}, \R_{>0}$ and $\R^{n}$ denote the field of reals, non-negative reals, positive reals and the $n$-dimensional Euclidean space respectively. The function $f:\R^{n}\rightarrow \R$ is positive definite if $f(x)>0$ for all non-zero $x\in \R^{n}$, similarly if $f(x)\ge 0$ for all $x\in \R^{n}$ then $f$ is positive semidefinite. The set of functions $g:\R^{n}\rightarrow \R$ which is  $n$-times continuously differentiable is denoted~$\cC^{n}$. $co(\cX)$ denotes the convex hull of the set $\cX$, $\cX^\circ$ its interior, $\partial \cX$ its boundary,  and $\overline{\cX}$ its closure. The minimum (maximum) of a scalar function $S(x)$ in a compact set $\mathcal{Y}$ is denoted $\displaystyle{\min_{x\in \mathcal{Y}}(S(x))}$ ($\displaystyle{\max_{x\in \mathcal{ Y}}(S(x))}$). We also use $\max$ to denote the function taking the maximum of its arguments. For $x\in \R^{m}$ the ring of polynomials in $m$ variables is denoted by $\R[x]$. For $p\in\R[x]$, $deg(p)$ denotes the degree of $p$.  A polynomial  $p(x)$ is said to be a sum-of-squares if there exists a finite set of polynomials $g_{1}(x),\hdots,g_{k}(x)$ such that $p(x)=\sum_{i=1}^{k}g_{i}^{2}(x)$. The set of SOS  polynomials in $x$ is denoted by $\Sigma[x_{1},\hdots,x_{m}]$ which can be abbreviated to $\Sigma[x]$. Equivalently $p(x)$ is SOS if there exists a positive semidefinite matrix $Q$ such that $p(x)=Z^{T}(x)QZ(x)$ where $Z(x)$ is a  vector of monomials  \cite{Par00}. Note that the search for $Q$ can be formulated as a semidefinite programme and thus solved using convex optimization techniques~\cite{VanB96}. 

Consider the dynamical system 
\begin{equation}\label{eq:sys}
\dot{x}=f(x)
\end{equation} 
where $f:\cD\rightarrow \R^{n}$ is a locally Lipschitz map from a domain $\cD\subset \R^{n}$ to $\R^{n}$, with $0\in \cD$. Let us assume $x=0$ is an equilibrium point, \textit{i.e.} $0 \in \{x\in \R^n | f(x) = 0\}$. Denote by $\phi(t,x(0))$ the solution to (\ref{eq:sys}) that is initiated from the point $x(0)$ at time $t=0$, the set $L$ is said to be \emph{invariant} with respect to (\ref{eq:sys}) provided $x(0) = \phi(0,x(0))\in L\Rightarrow x(t) = \phi(t,x(0))\in L$, $\forall t\in \R$. Furthermore, $L$ is said to be \emph{positively invariant} with respect to (\ref{eq:sys}) if the previous implication holds for all $t\ge 0$. Given a function $R:\R^{n}\rightarrow \R$ we define the set $\cE(R,\gamma):= \left\{x \in \R^{n}\text{ }\left|\text{ } R(x)\le \right. \gamma \right\}$ for some $\gamma>0$ and $\cE^{\circ}(R,\gamma):= \left\{x \in \R^{n}\text{ }\left|\text{ } R(x)< \right. \gamma \right\}$ for some $\gamma>0$. Additionally, provided that a function $V:\R^{n}\setminus \{0\}\rightarrow \R_{> 0 }$ satisfies $\dot{V}(x)= \frac{\partial V}{\partial x}f(x)< 0$  on $\cE(V,\gamma)$ then the set $\cE(V,\gamma)$ is said to be \emph{contractive} and \emph{invariant}, furthermore the function $V$ is said to be a Lyapunov function \cite[Chapter 4]{Kha02}. 
 We assume throughout this work that any function used to define a contractive set is in $\cC^{1}$. The region of attraction of an asymptotically stable equilibrium point $x^{*}$ of (\ref{eq:sys}) is defined as the set
\begin{equation}\label{eq:S}
\mathcal{S}	 := \left\{x\in \R^n \text{ }\left| \begin{array}{c} \text{ }\phi(t,x) \text{ is defined }\forall t\ge 0,\\ \text{ }\lim_{t \rightarrow \infty}\phi(t,x) = x^{*} \end{array}\right. \right\},
\end{equation}
without loss of generality, we will assume throughout this paper that the equilibrium point of interest is at the origin, i.e. $x^*=0$.

The focus of this paper is to construct inner estimates of $\mathcal{S}$ by computing positively invariant sets.



\section{Main results}
\label{sec:main}
In this section we present conditions to certify that a compact set is a postively invariant set and provides an estimate of the RA for the origin of~\eqref{eq:sys}. It is also shown how, under a different condition, to obtain an alternative Lyapunov certificate. We then extend these results to the case where the system under study is affected by parametric uncertainty.
\subsection{Region of Attraction Estimates}
The following theorem is used to verify that a compact set is positively invariant and defines an estimate of the RA of the equilibrium point at the origin and to obtain functions of which the denominator provides the RA estimate.

\begin{theorem}
\label{thm:inv}
Given $R: \R^n \rightarrow \R$, $R \in \cC^1$ and $\gamma >0$, satisfying
\begin{subequations}\label{eq:R}
\begin{align}
\cE(R,\gamma) \subset \cD~\mbox{is compact  and}~~0\in \cE(R,\gamma),\label{eq:compactness} \\
-\left\langle \nabla R(x),  f(x) \right \rangle  > 0  \quad  \forall x \in  \partial \cE(R,\gamma) , \label{eq:Rineq}
\end{align}
\end{subequations}
if there exists  $V_N: \R^n~\rightarrow~\R$, $V_N(0) = 0$, $V_N \in \cC^1$, such that
\begin{subequations}
\label{eq:thmstab}
\begin{equation}
\label{eq:Vposineq}
V_N(x) >0 \ \forall x \in  \cE(R,\gamma) \setminus \{0\},
\end{equation}
\begin{equation}
-\left\langle \nabla V_N(x), f(x)\right \rangle  >0 \ \forall x \in  \cE(R,\gamma) \setminus \left\{0 \right\},
\label{eq:Vdotineq}
\end{equation}
\end{subequations}
then
\begin{enumerate}[(I)]
\item the solutions $x(t) = \phi(t, x_0)$ to~\eqref{eq:sys},  with $x_0 \in  \mathcal{E}(R, \gamma)$ satisfy $x(t')\in \cS ~\forall t' \in [0,\infty).$
\end{enumerate}
Moreover, if \eqref{eq:compactness}, \eqref{eq:Vposineq}  and 
\begin{multline}
-\left\langle \left[ \nabla V_N(x)(\gamma - R(x)) + V_N(x)\nabla R(x) \right], f(x)\right \rangle  >0 \\ \forall x \in    \cE(R,\gamma)  \setminus \{0\}
\label{eq:Vratdotineq}
\end{multline}
hold and there exists a positive scalar $\bar{\epsilon}$ such that $0 \notin \cE(R,\gamma ) \setminus \cE^\circ(R,\gamma-\bar{\epsilon}) $ then
\begin{enumerate}[(II)]
\item the function
\begin{equation}
\label{eq:LyapRational}
V(x) = \dfrac{V_N(x)}{\gamma - R(x)}
\end{equation}
\end{enumerate}
is a Lyapunov function for~\eqref{eq:sys} and gives $\mathcal{E}^{\circ}(R, \gamma)$ as an estimate of $\cS$.
\end{theorem}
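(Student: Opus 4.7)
For part (I), the plan is to first upgrade the boundary inequality \eqref{eq:Rineq} into positive invariance of the entire sublevel set $\cE(R,\gamma)$. Since \eqref{eq:compactness} gives compactness and $-\langle\nabla R(x),f(x)\rangle>0$ holds on $\partial\cE(R,\gamma)=\{x\in\R^n : R(x)=\gamma\}$, any trajectory reaching the boundary has $\dot R(x)<0$ and hence points back into $\cE^\circ(R,\gamma)$; a Nagumo-type sub-tangentiality argument (or a short contradiction argument using continuity of $t\mapsto R(\phi(t,x_0))$) then upgrades this local outward-pointing property to positive invariance of $\cE(R,\gamma)$. On this compact positively invariant set, conditions \eqref{eq:Vposineq}--\eqref{eq:Vdotineq} make $V_N$ a strict Lyapunov function on $\cE(R,\gamma)\setminus\{0\}$, so the standard Lyapunov argument (applied to small sublevel sets of $V_N$ contained near the origin and combined with monotone decrease of $V_N$ along every trajectory in the compact set) gives $\phi(t,x_0)\to 0$ for every $x_0\in\cE(R,\gamma)$, which is (I).

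For part (II), I would introduce $V(x)=V_N(x)/(\gamma-R(x))$ and first establish its qualitative Lyapunov-like behaviour. On $\cE^\circ(R,\gamma)$ the denominator is strictly positive, and \eqref{eq:Vposineq} yields $V(x)>0$ for $x\neq 0$; the $\bar{\epsilon}$ hypothesis forces $R(0)<\gamma-\bar{\epsilon}<\gamma$, so $V(0)=0$ is well-defined and $V$ is continuous at the origin. Because $\partial\cE(R,\gamma)$ is compact and does not contain $0$, $V_N$ is bounded below by a positive constant on the boundary while $\gamma-R(x)\to 0^+$ as $x\to\partial\cE(R,\gamma)$, so $V(x)\to\infty$ there, which is the essential ``blow-up'' property inherited from the maximal Lyapunov function philosophy. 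A direct quotient-rule computation along trajectories of~\eqref{eq:sys} gives
\[
\dot V(x)=\dfrac{\bigl\langle\nabla V_N(x)(\gamma-R(x))+V_N(x)\nabla R(x),\,f(x)\bigr\rangle}{(\gamma-R(x))^2},
\]
and \eqref{eq:Vratdotineq} is exactly the statement that $\dot V(x)<0$ on $\cE^\circ(R,\gamma)\setminus\{0\}$.

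To conclude, for every $c>0$ the sublevel set $\cE(V,c)$ is closed, contained in $\cE^\circ(R,\gamma)$ by the blow-up, hence compact, and positively invariant since $\dot V<0$ on $\cE^\circ(R,\gamma)\setminus\{0\}$; the classical Lyapunov theorem then yields $\phi(t,x_0)\to 0$ for each $x_0\in\cE(V,c)$, so $\cE(V,c)\subset\cS$. Letting $c\to\infty$, the family $\{\cE(V,c)\}_{c>0}$ exhausts $\cE^\circ(R,\gamma)$ (again by the blow-up), giving $\cE^\circ(R,\gamma)\subset\cS$ as claimed. The step I expect to require the most care is the blow-up/continuity-at-the-origin analysis of $V$: this is where the $\bar{\epsilon}$ hypothesis does real work, separating $0$ from $\partial\cE(R,\gamma)$ and, together with compactness, forcing $V_N$ to be uniformly bounded away from zero on the boundary. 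Without this hypothesis, $V$ could fail to qualify as a Lyapunov function even though the algebraic sign condition \eqref{eq:Vratdotineq} is satisfied, so the $\bar{\epsilon}$ condition is not cosmetic but genuinely drives the argument.
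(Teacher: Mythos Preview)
Your proof of (I) is essentially the paper's argument with the pieces reordered: the paper first isolates a contractive sublevel set $\mathcal{U}=\cE(V_N,\alpha)\cap\cE(R,\gamma)$ with $\alpha=\min_{\partial\cE(R,\gamma)}V_N$, then proves positive invariance of $\cE(R,\gamma)$ by the same contradiction you sketch, and finally drives trajectories from $\cE(R,\gamma)\setminus\cE^\circ(V_N,\alpha)$ into $\mathcal{U}$ in finite time via a uniform bound $\dot V_N\le -\lambda$. Your compressed ``strict Lyapunov function on a compact invariant set'' phrasing covers exactly these steps.

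For (II) you take a genuinely different route. The paper never invokes the blow-up of $V$ at $\partial\cE(R,\gamma)$; instead it works with the shrunk sets $\cE(R,\gamma-\epsilon)$, computes $\alpha_\epsilon=\min_{\partial\cE(R,\gamma-\epsilon)}V=\epsilon^{-1}\min_{\partial\cE(R,\gamma-\epsilon)}V_N$, and shows the explicit sandwich $\cE(R,\gamma-\epsilon\delta)\subset\cE(V,\alpha_\epsilon)\cap\cE(R,\gamma-\epsilon)\subset\cE(R,\gamma)$ for a fixed $\delta>1$ built from $\max V_N/\min V_N$ on the annulus $\cE(R,\gamma)\setminus\cE^\circ(R,\gamma-\bar\epsilon)$; letting $\epsilon\to0$ finishes. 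Your argument replaces this quantitative comparison by the qualitative observation that $V(x)\to\infty$ as $x\to\partial\cE(R,\gamma)$, which immediately makes every $\cE(V,c)$ compact in $\cE^\circ(R,\gamma)$ and lets you exhaust by sending $c\to\infty$. Both approaches use the $\bar\epsilon$ hypothesis in the same way, namely to keep $0$ away from $\partial\cE(R,\gamma)$ so that $V_N$ is bounded below by a positive constant there. Your route is shorter and closer in spirit to the maximal Lyapunov function viewpoint; the paper's route is more explicit and stays phrased in terms of $R$-sublevel sets, which are the objects actually computed in the later algorithms.
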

\begin{proof}
Proof of (I). \emph{Part 1} (Contractiveness of a level set of $V_N$): by assumption $\cE(R,\gamma)$ is compact thus we can compute $\alpha = \displaystyle{\min_{x \in \partial \cE(R,\gamma)}} V_N(x)$, then from \eqref{eq:Vposineq}, $\alpha>0$. 
Define $\mathcal{U} = \cE(V_N,\alpha) \cap \cE(R,\gamma)$, from \eqref{eq:Vposineq}, \eqref{eq:Vdotineq} we have
\label{eq:propstabproof}
\begin{equation*}
V_N(x) >0 \ \forall x \in \mathcal{U} \setminus \left\{0 \right\},
\end{equation*}
\begin{equation*}
-\left\langle \nabla V_N(x), f(x)\right \rangle  >0 \ \forall x \in \mathcal{U} \setminus \left\{0 \right\}.
\end{equation*}
Following \cite[Theorem 4.1]{Kha02} we have that the origin is asymptotically stable and an inner approximation of its region of attraction is given by $\mathcal{U}$, that is trajectories $\phi(t, x_0)$ with $x_0 \in \mathcal{U}$ exist, are unique, and satisfy $\phi(t, x_0)\rightarrow 0$ as $t\rightarrow \infty$.

\emph{Part 2} (Positively Invariance of $\cE(R,\gamma)$): since $\cE(R,\gamma)$ is compact and $f(x)$ is locally Lipschitz in any compact set, we have existence and uniqueness of solutions to $\dot{x} = f(x)$, for all $x_0  \in \cE(R,\gamma)$, provided every solution lies in $\cE(R,\gamma)$. Let us prove that for all $T \in [0,\infty)$ we have $x(T)\in \cE(R,\gamma)$. Assume there exists $x_0  \in \cE(R,\gamma)$ for which the solution leaves the set, then, there must exist a $T^{*}$ such that $x(T^{*}) = \phi(T^{*}, x_0)$ that satisfies $R(x(T^{*}))> \gamma$. From the continuity of solutions and continuity of $R(x)$ there must exist $\bar{T}$, $0<\bar{T}<T^{*}$ such that $R(x(\bar{T})) = \gamma$ and $\dot{R}(x(\bar{T})) = \langle \nabla R(x(\bar{T})), f(x(\bar{T})) \rangle \ge 0$, which contradicts \eqref{eq:Rineq}. Hence $\cE(R,\gamma)$ is a   positively  invariant set. 

\emph{Part 3} (Convergence of trajectories starting in $  \cE(R,\gamma)\setminus  \cE^{\circ}(V_N,\alpha)$ to $\mathcal{U}$):  Finally let us now prove that every trajectory satisfying $x(0) \in  ~\cE(R,\gamma) \setminus \cE^{\circ}(V_N, \alpha)$ enters the positively  invariant and contractive set $\mathcal{U}$, that is, that there exists a $T$ such that $x(T) \in \mathcal{U}$. Let $\beta = \displaystyle{\max_{x \in \partial \cE(R,\gamma)} V_N(x)}$. Since  \eqref{eq:Vdotineq} holds in $\cE(R,\gamma) \setminus \cE^{\circ}(V_N, \alpha)$ let $\lambda = -\displaystyle{\max_{x\in\cE(R,\gamma) \setminus \cE^{\circ}(V_N, \alpha)} \langle \nabla V_N,f(x) \rangle}$, which exists because the continuous function has a maximum over any compact set, from \eqref{eq:Vdotineq} we also get
$$V_N(x(t)) = V_N(x(0)) + \int_0^t \dot{V}_N(x(\tau))d\tau \leq V_N(x(0)) - \lambda t.$$
Since $V_N(x(0))\leq \beta$ we have $V_N(x(t)) \leq \beta - \lambda t$ in the set $\cE(R,\gamma) \setminus \cE^{\circ}(V_N, \alpha)$. This implies that $\exists T\geq0$ satisfying $T\leq \frac{\beta-\alpha}{\lambda}$ such that $V_N(x(T)) = \alpha$ and hence $x(T)\in \mathcal{U}$.

Proof of (II):  From \eqref{eq:Vposineq} we have that $V(x)>0~\forall x \in \cE^{\circ}(R, \gamma)\setminus \{0\}$.  The time-derivative of \eqref{eq:LyapRational} along the trajectories of \eqref{eq:sys} is given by 
\begin{equation}
\label{eq:VratdotFULL}
\dot{V}(x) = \dfrac{ \left\langle  \nabla V_N(x) (\gamma - R(x)) + V_N(x) \nabla{R} , f(x) \right\rangle  }{(\gamma - R(x))^2}
\end{equation}
which satisfies $-\dot{V}(x) >0~\forall x \in \cE^{\circ}(R, \gamma)\setminus \{0\}$ if \eqref{eq:Vratdotineq} holds true. Thus $V$ is a Lyapunov function for the equilibrium point at the origin of \eqref{eq:sys}. What is left to show is that the level curves of $V$ define the stated estimate of the ERA.


We have that $V(x)$ and $\dot{V}(x)$ are not defined in $\partial \cE(R, \gamma)$, thus it is not possible to compute scalars $\alpha$ and $\beta$ Parts~1 and~3 in the above Proof of (I). Consider  the arbitrarily small scalar, $\epsilon$ that defines the set $ \cE(R, \gamma - \epsilon)$ and assume it satisfies $\epsilon \leq \bar{\epsilon}. $ Then, as $\cE(R, \gamma)$ is compact it follows that so is $ \cE(R, \gamma - \epsilon)$. We can then follow the same steps of Proof of (I), Part 1 and compute a positive scalar $\alpha_\epsilon = \displaystyle{\min_{x \in \partial \cE(R,\gamma- \epsilon)}} V(x)$. Define $\mathcal{U}_\epsilon = \cE(V,\alpha_\epsilon) \cap \cE(R,\gamma-\epsilon)$, since $V(x)>0~\forall x \in \cE^{\circ}(R, \gamma)\setminus \{0\}$ and $-\dot{V}(x)>0~\forall x \in \cE^{\circ}(R, \gamma)\setminus \{0\}$ we have
\begin{align*}
V(x) >0 \quad \forall x \in \mathcal{U}_\epsilon \setminus \left\{0 \right\},\\
-\left\langle \nabla V(x), f(x)\right \rangle  >0 \quad \forall x \in \mathcal{U}_\epsilon \setminus \left\{0 \right\}.
\end{align*}
Then following \cite[Theorem 4.1]{Kha02} the origin is asymptotically stable and an inner approximation of its region of attraction is given by $\mathcal{U}_\epsilon$, that is trajectories $\phi(t, x_0)$ with $x_0 \in  \mathcal{U}_\epsilon$ exist, are unique, and satisfy $\phi(t, x_0)\rightarrow 0$ as $t\rightarrow \infty$.

Using \eqref{eq:LyapRational}, we obtain  
\begin{equation*}
\cE(V,\alpha_\epsilon) = \left\lbrace  x \in \real^{n} \mid R(x) \leq \gamma - \frac{V_N(x)}{\alpha_\epsilon} \right\rbrace.
\end{equation*}
Since 
\begin{equation*}
\begin{array}{rcl}
\alpha_\epsilon & = & \displaystyle{\min_{x \in \partial \cE(R, \gamma-\epsilon)}} V(x)\\
& = & \displaystyle{\min_{x \in \partial \cE(R, \gamma-\epsilon)}} \frac{V_N(x)}{\gamma-R(x)}\\
& = & \displaystyle{\min_{x \in \partial \cE(R, \gamma-\epsilon)}} \frac{V_N(x)}{\gamma-(\gamma - \epsilon)}\\
& = & \dfrac{1}{\epsilon}\left(\displaystyle{\min_{x \in \partial \cE(R, \gamma-\epsilon)}}  V_N(x)\right)\\
\end{array}
\end{equation*}
we then have
\begin{equation}
\cE(V,\alpha_\epsilon) = \left\lbrace  x \in \real^{n} \mid R(x) \leq \gamma - \epsilon \frac{V_N(x)}{ \displaystyle{\min_{x \in \partial \cE(R, \gamma-\epsilon)}}  V_N(x) } \right\rbrace.
\label{eq:setIdentity}
\end{equation}
Define a positive scalar  $\delta$ as follows: 
\begin{equation*}
\delta:=  \frac{ \displaystyle \max_{x \in  \cE(R, \gamma)} V_N(x)}{ \displaystyle \min_{x \in  \cE(R, \gamma) \setminus \cE^{\circ}(R,\gamma-\bar{\epsilon})} V_N(x) }
\end{equation*}
then it is immediate that $\delta>1$.  As $\epsilon\leq \bar{\epsilon}$ we have  $\partial \cE(R, \gamma-\epsilon) \subset  \cE(R, \gamma) \setminus \cE^{\circ}(R,\gamma-\bar{\epsilon})$, thus $\displaystyle{\min_{x \in \partial \cE(R, \gamma-\epsilon)} V_N(x) \geq \min_{x \in \cE(R, \gamma) \setminus \cE^{\circ}(R,\gamma-\bar{\epsilon})} V_N(x)}$  and hence $$\delta >\dfrac{V_N(x)}{\displaystyle{\min_{x \in \partial \cE(R, \gamma-\epsilon)}}  V_N(x)},~\forall x \in \cE(R, \gamma).$$

Since $\epsilon$ is positive, the set containment
\begin{equation*} \cE(R, \gamma-\epsilon\delta) \subset \left\lbrace  x \in \real^{n} \mid R(x) \leq \gamma - \epsilon \frac{V_N(x)}{ \displaystyle{\min_{x \in \partial \cE(R, \gamma-\epsilon)}}  V_N(x) } \right\rbrace
\end{equation*}
holds true. Using \eqref{eq:setIdentity}  we have $\cE(R, \gamma-\epsilon\delta) \subset  \cE(V, \alpha_\epsilon)$, thus $\cE(R, \gamma-\epsilon\delta) = \cE(R, \gamma-\epsilon\delta) \cap \cE(R, \gamma-\epsilon) \subset \cE(V, \alpha_\epsilon) \cap \cE(R, \gamma-\epsilon)  = \mathcal{U}_\epsilon$ which, using the fact that $\mathcal{U}_\epsilon \subset \cE(R, \gamma)$, we obtain
\begin{equation*} \cE(R, \gamma-\epsilon\delta) \subset  \mathcal{U}_\epsilon \subset \cE(R, \gamma).
\end{equation*}
Thus $\cE(R, \gamma-\epsilon\delta)$ is an estimate of the RA of~\eqref{eq:sys}. Since~$\delta$ is bounded and $\epsilon$ can be chosen to be arbitrarily small, we have that the set $\cE^{\circ}(R, \gamma)$ is an estimate of the RA of~\eqref{eq:sys}.\end{proof}

A consequence of the the assumption imposed in  Theorem \ref{thm:inv} that the set $\mathcal{E}(R,\gamma)$ is compact, is that $\mathcal{E}(R,\gamma)$ will be connected. This follows from the fact that ~\eqref{eq:Rineq} makes $\mathcal{E}(R,\gamma)$  a positively invariant set, and by~\eqref{eq:Vposineq},  $V_N(x)$ is strictly positive on $\mathcal{E}(R,\gamma) \setminus \{0\}$, and finally ~\eqref{eq:Vdotineq} ensures its derivative is strictly negative on $\mathcal{E}(R,\gamma) \setminus \{0\}$.

\begin{remark}
Note that  $R(x)$ is not required to be positive definite, however it is required that $\cE(R,\gamma)$ is compact and contains the  origin. This requirement guarantees that, in the proof of Part 1, $\min_{\partial \cE(R,\gamma)}(V(x))$ is well-defined, such that $\cE(V,\alpha) \subseteq \cE(R,\gamma)$. 
\end{remark}


\begin{remark}
The boundedness of the set $\cE(R,\gamma)$ also guarantees the uniqueness of solutions in the set if the vector field is not globally Lipschitz (as for instance, the polynomial vector fields). 
\end{remark}

\begin{remark}
In the case $R(x)=V_N(x)$, \eqref{eq:Vdotineq} implies~\eqref{eq:Rineq} and the set $\cE(V_N,\gamma)$  has to be compact as required by~$\eqref{eq:compactness}$.
\end{remark}
The proposition below presents sufficient conditions to satisfy the constraints of Theorem~\ref{thm:inv} formulated in terms of inequalities and the definition of  $ \cE(R, \gamma)$. 
\begin{proposition}\label{prop:ineq}
Given $R \in \mathcal{C}^1$, $R: \R^n \rightarrow \R$, $\gamma>0$, satisfying
\begin{equation}
\cE(R,\gamma)~\mbox{is compact},~~0 \in \cE(R,\gamma),
\label{eq:compactnessSet}
\end{equation}
if there exist $V_N: \R^n \rightarrow \R$, $V_N \in \mathcal{C}^{1}$, $V_N(0) = 0$ and $m_0 : \R^n \rightarrow \R_{\geq 0 } $, $m_1 : \R^n \rightarrow \R_{\geq 0 }$, $p : \R^n \rightarrow \R$,  such that
\begin{subequations}
\label{eq:propstab}
\begin{eqnarray}
-\left\langle \nabla R(x),  f(x) \right \rangle  > p(x)(\gamma - R(x) ) &&  \forall x \in \cD \label{eq:RineqSet} \\
V_N(x) > m_0(x)(\gamma - R(x)) && \forall x \in \cD'  \label{eq:VposineqSet} \\
-\left\langle \nabla V_N(x), f(x)\right \rangle  > m_1(x)(\gamma - R(x)) && \forall x \in \cD'  \label{eq:VdotineqSet} 
\end{eqnarray}
\end{subequations}
\noindent
where $\cD' := \cD\setminus \{ 0\}$, then~$\mathcal{E}(R, \gamma)$ is an ERA of the origin. If~\eqref{eq:compactnessSet},~\eqref{eq:VposineqSet},~\eqref{eq:RineqSet} hold and there exist $m_2 : \R^n \rightarrow \R_{\geq 0 } $ such that 
\begin{equation}
-\left\langle \nabla V_N(x) , f(x)\right \rangle + V_N(x)p(x) > m_2(x)(\gamma - R(x))
\label{eq:VratdotineqSet}
\end{equation}
holds, then~\eqref{eq:LyapRational} is a Lyapunov function for~\eqref{eq:sys} and $\cE^{\circ}(R,\gamma)$ is an ERA of the origin.
\end{proposition}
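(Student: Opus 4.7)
The plan is to show that each inequality constraint in Proposition~\ref{prop:ineq} is a Positivstellensatz-style relaxation of the corresponding hypothesis in Theorem~\ref{thm:inv}: the multipliers $m_0, m_1, m_2, p$ are designed so that on $\cE(R,\gamma)$ the term $\gamma - R(x) \geq 0$ vanishes on the boundary and is harmless in the interior. Thus the work reduces to a sequence of sign arguments.

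For the first claim, I would verify the three hypotheses of Theorem~\ref{thm:inv}(I) in turn. Taking \eqref{eq:RineqSet} restricted to $\partial \cE(R,\gamma)$, where $\gamma - R(x) = 0$, immediately yields \eqref{eq:Rineq}. Evaluating \eqref{eq:VposineqSet} on $\cE(R,\gamma) \setminus \{0\}$ and using $m_0(x) \geq 0$ together with $\gamma - R(x) \geq 0$ gives $V_N(x) > 0$, which is \eqref{eq:Vposineq}; the same argument applied to \eqref{eq:VdotineqSet} with $m_1(x) \geq 0$ yields \eqref{eq:Vdotineq}. Combined with the compactness hypothesis \eqref{eq:compactnessSet}, Theorem~\ref{thm:inv}(I) then delivers the ERA conclusion.

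For the second claim, I would first establish the auxiliary condition on the origin required by Theorem~\ref{thm:inv}(II). Evaluating \eqref{eq:RineqSet} at $x = 0$ with $f(0)=0$ gives $0 > p(0)(\gamma - R(0))$, which together with $\gamma - R(0) \geq 0$ forces $\gamma - R(0) > 0$; picking any $\bar\epsilon \in (0, \gamma - R(0))$ places $0$ in $\cE^\circ(R, \gamma - \bar\epsilon)$ and so $0 \notin \cE(R,\gamma) \setminus \cE^\circ(R, \gamma - \bar\epsilon)$. The core step is then to derive \eqref{eq:Vratdotineq} from \eqref{eq:RineqSet} and \eqref{eq:VratdotineqSet}. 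My approach is to multiply \eqref{eq:RineqSet} by $V_N(x) > 0$ (available from the first part applied to get \eqref{eq:Vposineq}) and \eqref{eq:VratdotineqSet} by $\gamma - R(x) \geq 0$, then add the two inequalities; after cancellation of the $V_N p(\gamma - R)$ terms one obtains exactly
\begin{equation*}
-\langle \nabla V_N(x)(\gamma - R(x)) + V_N(x)\nabla R(x), f(x)\rangle > m_2(x)(\gamma - R(x))^2 \geq 0
\end{equation*}
on $\cE(R,\gamma) \setminus \{0\}$. Applying Theorem~\ref{thm:inv}(II) then finishes the proof.

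The main obstacle, and the point requiring care, is preserving strictness near $\partial \cE(R,\gamma)$: there both $\gamma - R(x)$ and the multiplier terms collapse to zero, so the strict inequality in \eqref{eq:Vratdotineq} must come from somewhere else. One has to check separately that on the boundary the expression reduces to $-V_N(x)\langle \nabla R(x), f(x)\rangle$, which is strictly positive because $V_N(x) > 0$ there and \eqref{eq:Rineq} (already established) gives $-\langle \nabla R, f\rangle > 0$. In the interior, $\gamma - R(x) > 0$ allows the strict inequality in \eqref{eq:RineqSet} to be multiplied by the strictly positive $V_N(x)$ without loss, closing the argument.
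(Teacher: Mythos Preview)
Your argument is correct and follows the same reduction to Theorem~\ref{thm:inv} as the paper. The only cosmetic difference is in how \eqref{eq:Vratdotineq} is obtained: the paper bounds $-\dot V$ directly in its rational form \eqref{eq:VratdotFULL}, using \eqref{eq:RineqSet} to replace $-V_N\langle\nabla R,f\rangle/(\gamma-R)^2$ by $V_N p/(\gamma-R)$ and then invoking \eqref{eq:VratdotineqSet}, whereas you multiply the two polynomial inequalities by $V_N$ and $\gamma-R$ and add. These are the same manipulation up to multiplication by $(\gamma-R)^2$. Your treatment is in fact slightly more complete than the paper's: you explicitly verify the $\bar\epsilon$ hypothesis of Theorem~\ref{thm:inv}(II) (the paper's proof skips this), and you spell out why strictness survives on $\partial\cE(R,\gamma)$, where the $(\gamma-R)$-weighted term vanishes. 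Note, incidentally, that your boundary/interior case split is not strictly needed: since $V_N>0$ on $\cE(R,\gamma)\setminus\{0\}$, the inequality obtained from \eqref{eq:RineqSet}$\times V_N$ is already strict everywhere, and adding the non-strict inequality \eqref{eq:VratdotineqSet}$\times(\gamma-R)\ge 0$ preserves strictness uniformly.
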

\begin{proof}
From the non-negativity  of $m_0(x)$, and $m_1(x)$ we have that \eqref{eq:VposineqSet}, \eqref{eq:VdotineqSet} imply that \eqref{eq:Vposineq}, \eqref{eq:Vdotineq} hold. Since at $\partial \cE(R, \gamma)$ we have $ \gamma - R(x)   = 0$, \eqref{eq:RineqSet} implies \eqref{eq:Rineq}, and, according to Theorem~\ref{thm:inv} Claim~(I), $\cE(R, \gamma)$ is an ERA of the origin. The time derivative of $V(x)$, as in~\eqref{eq:LyapRational}, is given by~\eqref{eq:VratdotFULL}. Since we have $V_N(x)>0$ in $\cE(R, \gamma)$ if \eqref{eq:VposineqSet} holds, inequality~\eqref{eq:RineqSet} provides a lower bound for $-\dot{V}$ as follows
\begin{equation*}
\begin{array}{rcl}
- \dot{V}(x) & = & -\dfrac{\left\langle \nabla V_N(x) ,f(x)\right \rangle}{(\gamma -R)  } -\dfrac{V_N(x) \left\langle \nabla R(x), f(x)\right \rangle}{(\gamma -R)^2 } \\
&>  & -\dfrac{\left\langle \nabla V_N(x) ,f(x)\right \rangle}{(\gamma -R)  } +\dfrac{V_N(x) p(x)}{(\gamma -R) } ,
\end{array}
\end{equation*}
therefore, if there exists $m_2  : \R^n \rightarrow \R_{>0}$ satisfying
\begin{equation*}
 \dfrac{ \left( - \left\langle \nabla V_N(x), f(x)\right\rangle  + V_N(x)p(x) \right)}{(\gamma -R)} > m_2(x),
\end{equation*}
that is \eqref{eq:VratdotineqSet},
then $-\dot{V}(x)>0$ (and \eqref{eq:Vratdotineq} holds true). Following Theorem~\ref{thm:inv}, Claim (II), $V(x)$ is a Lyapunov function for~\eqref{eq:sys} in $\cE^{\circ}(R, \gamma)$,  thus providing an ERA for the origin of~\eqref{eq:sys}.
\end{proof}

Before developing the theory further in order to take into account non-smooth set descriptions and ERAs for uncertain systems we first compare the above conditions to the classical results of Zubov via Vannelli and Vidyasagar's \emph {maximal Lyapunov function} \cite{VV85} framework which characterises the region of attraction $\cS$.
\begin{definition}
\label{def:MLF}
A function $V_m:\R^n \rightarrow \R_{>0}  \cup \left\{ \infty \right\}$ that for the system \eqref{eq:sys} satisfies
\begin{enumerate}
\item $V_m(0)=0, V_m(x)>0$ if $x\in \cS\setminus \left\{ 0 \right\}$,
\item $V_m(x)<\infty$ iff $x\in \cS$,
\item $V_m(x)\rightarrow \infty$ as $x\rightarrow \partial \cS$ and/or $\|x\|\rightarrow \infty$,
\item $\dot{V}_m(x)<0$ and well defined for all $x\in \cS \setminus  \left\{ 0 \right\}$,
\end{enumerate}
is called a maximal Lyapunov function.
\end{definition}
The main result from \cite{VV85} is summarised by the following theorem:
\begin{theorem}\label{thm:VV}
Suppose we can find a set $\cA\subseteq \R^n$ which contains $x=0$ in its interior, a continuously differentiable function $V_m:\cA\rightarrow \R_{>0}$ and a positive definite function $\psi(x)$ such that
\begin{enumerate}
\item  $V_m(0)=0, V(x)>0$ if $x\in \cA\setminus \left\{ 0 \right\}$,
\item  $\dot{V}_m(x) = - \psi(x)$  for all $x\in \cA$,
\item $V_m(x)\rightarrow \infty$ as $x\rightarrow \partial \cA$ and/or $\|x\|\rightarrow \infty$,
\end{enumerate}
then $\cA=\cS$.
\end{theorem}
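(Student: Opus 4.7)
The plan is to prove $\cA = \cS$ by a double inclusion, with the two directions requiring rather different arguments. For the inclusion $\cA \subseteq \cS$, I would take $x_0 \in \cA$ and run the standard Lyapunov argument, with one refinement needed at the outset. First show that the forward trajectory $\phi(t, x_0)$ stays in $\cA$ and remains bounded: conditions 1 and 2 force $V_m(\phi(t, x_0)) \leq V_m(x_0) < \infty$ as long as the trajectory is in $\cA$, while condition 3 forces $V_m$ to blow up whenever the state approaches $\partial \cA$ or escapes every bounded set, so neither of these possibilities is compatible with $V_m$ remaining finite along the trajectory. Consequently $\phi(t, x_0) \in \cA$ for all $t \geq 0$, and the trajectory lies in a compact sublevel set $\{V_m \leq V_m(x_0)\}$. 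Since $\dot V_m = -\psi$ with $\psi$ positive definite, standard asymptotic stability (or a LaSalle argument, whose largest invariant subset of $\{\psi = 0\}$ is $\{0\}$) gives $\phi(t, x_0) \to 0$, so $x_0 \in \cS$.

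For the reverse inclusion $\cS \subseteq \cA$, I would take $x_0 \in \cS$ and argue by contradiction. Since $0 \in \cA^\circ$ and $\phi(t, x_0) \to 0$, there exists $T > 0$ with $\phi(T, x_0) \in \cA$; the goal is to show $\phi(t, x_0) \in \cA$ on all of $[0, T]$, and in particular at $t = 0$. Set
\[
t^{*} := \inf\{\, t \in [0, T] : \phi(s, x_0) \in \cA\ \text{for all}\ s \in [t, T]\,\}
\]
and suppose, for contradiction, that $t^{*} > 0$. Continuity of the flow then forces $\phi(t^{*}, x_0) \in \partial \cA$ while $\phi(s, x_0) \in \cA$ for $s \in (t^{*}, T]$, so condition 3 gives $V_m(\phi(s, x_0)) \to +\infty$ as $s \downarrow t^{*}$. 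Integrating condition 2 on $(t^{*}, T]$, however, yields
\[
V_m(\phi(\tau, x_0)) = V_m(\phi(T, x_0)) + \int_\tau^T \psi(\phi(s, x_0))\, ds ,
\]
and since the trajectory piece $\phi([t^{*}, T], x_0)$ is a continuous image of a compact interval, hence bounded, and $\psi$ is continuous, the right-hand side stays uniformly bounded as $\tau \downarrow t^{*}$. This contradicts the divergence of $V_m$, so $t^{*} = 0$ and $x_0 \in \cA$.

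The step I expect to be the main obstacle is the reverse inclusion: because $V_m$ carries no information outside $\cA$, one cannot directly run a Lyapunov-style comparison on a trajectory that briefly leaves $\cA$, and the argument has to proceed from inside $\cA$ by contrasting the local finite growth rate of $V_m$ (controlled by $\psi$) with the forced blow-up at $\partial \cA$ dictated by condition 3. Carrying this out requires interpreting condition 3 as $V_m(x_n) \to \infty$ along every sequence $x_n \in \cA$ approaching $\partial \cA$ (or infinity) and some mild care about $\cA$ being effectively open in the direction of $\partial \cA$, so that the phrase ``approach from inside'' is well defined and the boundary crossing in the definition of $t^{*}$ is located on $\partial \cA$.
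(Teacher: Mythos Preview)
The paper does not prove Theorem~\ref{thm:VV}; it is quoted there as a summary of the main result of \cite{VV85} and serves only as background against which the authors' own Theorem~\ref{thm:inv} is compared. There is therefore no in-paper argument to match your proposal against.

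On its own merits, your proposal is sound and follows the standard route for Zubov-type results. For $\cA \subseteq \cS$ the key observation you use---that every sublevel set $\{x\in\cA : V_m(x)\le c\}$ is compact because condition~3 forbids accumulation on $\partial\cA$ or at infinity---is exactly what makes the LaSalle step go through. For $\cS \subseteq \cA$ you have correctly isolated the mechanism: the finite integral bound $V_m(\phi(\tau,x_0)) = V_m(\phi(T,x_0)) + \int_\tau^T \psi(\phi(s,x_0))\,ds$ over the compact arc $\phi([t^*,T],x_0)$ is incompatible with the divergence forced by condition~3 as the trajectory approaches $\partial\cA$ from inside. Your caveat that $\cA$ must be read as open is well placed and is implicit in the hypotheses, since $V_m$ is finite on $\cA$ yet blows up toward $\partial\cA$; note also that $\psi$ is automatically continuous here because $\psi = -\langle \nabla V_m, f\rangle$ with $V_m\in\cC^1$ and $f$ locally Lipschitz, so the boundedness of the integral needs no extra assumption.
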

Clearly an MLF satisfies the properties of Theorem~\ref{thm:VV}, however in~\cite{VV85} the authors constructively provide a method for extending any LF for~\eqref{eq:sys} into an MLF (the assumption is that $\psi(x)$  has been constructed). They then show that rational LFs of the form $V(x) = \frac{V_N(x)}{V_D(x)}$ with polynomial numerator and denominator can arbitrarily approximate an MLF. Furthermore the  set $\{x\mid V_D(x)=0\}$ defines the boundary of $\mathcal{S}$.

In comparison to Theorem~\ref{thm:VV}, Theorem~\ref{thm:inv} shows that the zero level set of a rational Lyapunov function can be used to compute an estimate of the RA. The main difference between the two results is that our theorem provides an estimate of the RA whilst the MLF provides the exact RA (albeit at the cost of having to solve a partial differential equation, namely item~2 of Theorem~\ref{thm:VV}). Thus the LF \eqref{eq:LyapRational} is not necessarily an MLF. In the remainder of this section we develop further the results of Theorem \ref{thm:inv}.

\subsection{Piece-wise Lyapunov Functions}
The following result parallels Theorem~\ref{thm:inv} and considers positively invariant regions defined by the maximum of a set of differentiable functions. 

Let $d$ be a finite positive integer and  functions $R_i(x),\hdots, R_d(x)$ be given. The \emph{point-wise maximum} of these functions at $x$ is defined as
\begin{equation}
\label{eq:Rmax}
R_M(x)  := \max (R_1(x), \ldots,R_d(x)).
\end{equation}
\begin{theorem}
\label{thm:invPW}
Given  $R_i: \R^n \rightarrow \R$, $R_i\in \cC^{1}$ $i = 1, \ldots, d$ and a positive scalar~$\gamma$, satisfying
\begin{subequations}
\label{eq:prop2a}
\begin{equation}
\label{eq:compactness2}
\cE(R_M,\gamma)~\mbox{is compact  and}~~0 \in \cE(R,\gamma),
\end{equation}
\begin{equation}
-\left\langle \xi,  f(x) \right \rangle > 0 \\ \ \forall x \in  \partial \cE(R_M,\gamma), \forall\xi \in \dfrac{\partial R_M(x)}{\partial x}
\label{eq:Rineqratcomp}
\end{equation}
\end{subequations}
where $\frac{\partial R_M(x)}{\partial x}$ denotes the subdifferential of $R_M(x)$ at $x$, if there exists a function $V_N: \R^n~\rightarrow~\R_{\geq 0}$, $V_N(0) = 0$, $V_N \in \cC^{1}$  such that
\begin{subequations}
\label{eq:prop2}
\begin{equation}
V_N(x) >0 \ \forall x \in  \cE(R_M,\gamma) \setminus \{0\}
\label{eq:Vposineqratcomp}
\end{equation}
\begin{equation}
- \left\langle \nabla V_N(x), f(x) \right \rangle >0 \ \forall x \in \cE(R_M,\gamma) \setminus \{0\}
\label{eq:Vdotineqratcomp}
\end{equation}
\end{subequations}
then
\begin{enumerate}[(I)]
\item all trajectories of~\eqref{eq:sys} initiated from the set $\cE(R_M,\gamma)$ converge to the origin. 
\end{enumerate}
Moreover, if  \eqref{eq:Vposineqratcomp}, and 
\begin{multline}
-\left\langle \left[ \nabla V_N(x)(\gamma - R_M(x)) + V_N(x)\nabla R_M(x) \right], f(x)\right \rangle  >0 \\ \forall x \in    \cE(R_M,\gamma) 
\label{eq:Vratdotineqcomp}
\end{multline}
hold, and there exists a positive scalar $\bar{\epsilon}$ such that $0 \notin \cE(R_M,\gamma ) \setminus \cE^\circ(R_M,\gamma-\bar{\epsilon}) $ then 
\begin{enumerate}[(II)]
\item the function
\begin{equation}
\label{eq:LyapRationalcomp}
V(x) = \dfrac{V_N(x)}{\gamma - R_M(x)}
\end{equation}
is a Lyapunov function for~\eqref{eq:sys} and gives $\mathcal{E}^{\circ}(R_M, \gamma)$ as an estimate of $\cS$.
\end{enumerate}
\end{theorem}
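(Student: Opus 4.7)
The plan is to adapt the three-step structure of the proof of Theorem~\ref{thm:inv} to the piecewise setting. Because $V_N$ is still $\cC^1$, most of the Lyapunov bookkeeping survives unchanged; the only genuinely new ingredient is how the non-smooth function $R_M$ interacts with the flow on the boundary $\partial\cE(R_M,\gamma)$, which forces a subdifferential/Dini-derivative argument.

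For Claim~(I), the first step (contractiveness of a level set of $V_N$) I would copy verbatim from Theorem~\ref{thm:inv} Part~1: since $\cE(R_M,\gamma)$ is compact and $V_N\in\cC^1$, the scalar $\alpha=\min_{x\in\partial\cE(R_M,\gamma)}V_N(x)$ exists and is strictly positive by~\eqref{eq:Vposineqratcomp}; setting $\mathcal{U}=\cE(V_N,\alpha)\cap\cE(R_M,\gamma)$ and invoking~\cite[Theorem 4.1]{Kha02} together with~\eqref{eq:Vdotineqratcomp} gives $\mathcal{U}$ as a contractive, positively invariant neighbourhood of the origin. The new step is the positive invariance of $\cE(R_M,\gamma)$ itself: suppose, for contradiction, that some $\phi(\cdot,x_0)$ with $x_0\in\cE(R_M,\gamma)$ leaves the set at some $T^*$. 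Continuity of $R_M$ (as a max of continuous $R_i$) and of $\phi$ yields a first boundary-crossing time $\bar T$ with $R_M(\phi(\bar T,x_0))=\gamma$ and upper right Dini derivative $D^{+}(R_M\circ\phi)(\bar T)\geq 0$. For a pointwise maximum of $\cC^1$ functions this Dini derivative equals $\max_{i\in I(\bar T)}\langle\nabla R_i(\phi(\bar T,x_0)),\,f(\phi(\bar T,x_0))\rangle$, where $I(\bar T)=\{i:R_i(\phi(\bar T,x_0))=R_M(\phi(\bar T,x_0))\}$ is the active index set, so some active gradient $\nabla R_{i^*}$ satisfies $\langle \nabla R_{i^*},f\rangle\geq0$; since $\partial R_M(x)=\mathrm{co}\{\nabla R_i(x):i\in I(x)\}$, this contradicts~\eqref{eq:Rineqratcomp}. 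The third step (convergence of trajectories in $\cE(R_M,\gamma)\setminus\cE^\circ(V_N,\alpha)$ into $\mathcal{U}$) is identical to Theorem~\ref{thm:inv} Part~3 because it uses only smoothness of $V_N$ and compactness of $\cE(R_M,\gamma)$.

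For Claim~(II), I would reuse the $\epsilon$-shrinking construction from the proof of Theorem~\ref{thm:inv}: for arbitrarily small $\epsilon\in(0,\bar\epsilon]$ I replace $\cE(R_M,\gamma)$ by $\cE(R_M,\gamma-\epsilon)$, on which $V=V_N/(\gamma-R_M)$ and its generalized time-derivative are bounded and locally Lipschitz. Formula~\eqref{eq:VratdotFULL} must be rewritten with $\nabla R_M$ replaced by an arbitrary $\xi\in\partial R_M(x)$, and strict negativity of $\dot V$ along trajectories then follows from~\eqref{eq:Vratdotineqcomp} applied at every subgradient, via the fact (again) that $\partial R_M$ is the convex hull of active gradients. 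Defining $\alpha_\epsilon=\min_{x\in\partial\cE(R_M,\gamma-\epsilon)}V(x)$ and $\mathcal{U}_\epsilon=\cE(V,\alpha_\epsilon)\cap\cE(R_M,\gamma-\epsilon)$, the same chain of inclusions $\cE(R_M,\gamma-\epsilon\delta)\subset\mathcal{U}_\epsilon\subset\cE(R_M,\gamma)$ used in Theorem~\ref{thm:inv}, together with $\delta$ bounded and $\epsilon$ arbitrary, identifies $\cE^\circ(R_M,\gamma)$ as the claimed estimate of $\cS$.

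The main obstacle is making the non-smooth calculus rigorous at the kinks of $R_M$, which occur on the intersections of the loci $\{R_i=R_j\}$. Everywhere that the proof of Theorem~\ref{thm:inv} computes $\dot R$ or $\nabla R$ pointwise, I need a substitute valid when several $R_i$ are simultaneously active. The right tools are the characterization $\partial R_M(x)=\mathrm{co}\{\nabla R_i(x):i\in I(x)\}$ and the formula for $D^{+}(R_M\circ\phi)$ as a maximum over active indices; with these in hand, hypothesis~\eqref{eq:Rineqratcomp} (quantified over the whole subdifferential) is strong enough to preclude boundary-crossings exactly as the smooth hypothesis~\eqref{eq:Rineq} does in Theorem~\ref{thm:inv}, and the rest of the argument carries across with only cosmetic changes.
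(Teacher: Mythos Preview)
Your proposal is correct and takes essentially the same approach as the paper, which omits the proof and simply remarks that one follows Theorem~\ref{thm:inv} verbatim except for the non-differentiability of $R_M$, handled via the set-valued inclusion $\dot R_M(x(t))\in\langle \partial R_M(x(t))/\partial x,\,f(x(t))\rangle$ and hypothesis~\eqref{eq:Rineqratcomp} to reach the same contradiction. Your Dini-derivative formulation $D^{+}(R_M\circ\phi)=\max_{i\in I}\langle\nabla R_i,f\rangle$ is in fact a sharper and more explicit version of the paper's inclusion, and your handling of Claim~(II) correctly interprets~\eqref{eq:Vratdotineqcomp} subgradient-wise, which the paper's statement leaves implicit.
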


\begin{remark}
To characterise the set $\cE(R_M,\gamma)$, notice that $R_M(x) = R_i(x) \ \forall x \in \{x \in\R^n | R_i(x)-R_j(x)\geq 0, j = 1, \ldots, d \}$. By defining
\begin{multline*}
\cM_i(R_M, \gamma) := \{x \in\R^n |  R_i(x) \leq \gamma, \\ R_i(x)-R_j(x)\geq 0, j = 1, \ldots, d \}.
\end{multline*}
we can write
$\cE(R_M,\gamma) = \bigcap_{i =1}^d \cM_i(R_M, \gamma)$.
\end{remark}

The subdifferential for the function $R_M(x)$ is defined as  $\frac{\partial R_M(x)}{\partial x} := co\{\nabla R_{\ell}(x), \ell \in \cI(x)\}, $ where $\cI(x) = \{i \in \{1, \ldots, d \} | R_i(x) = R_M(x)  \}$ denotes the set of ``active'' functions at point $x$. Notice that $R_M(x)$ is not differentiable at points $x$ where $\exists i,j \in \cI(x), i~\neq~j$, that is, at points satisfying $R_M(x)=R_i(x)=R_j(x), i \neq j$. At such points $\frac{\partial R_M(x)}{\partial x}$ defines a set, hence \eqref{eq:Rineqratcomp} describe a set of inequalities. Whenever $\cI(x)$ contains only one element, say $\cI(x) = \{k\}$,  $\frac{\partial R_M(x)}{\partial x}$ is a singleton given by $\nabla R_{k}$, which exists since $R_i(x)\in \cC^1, \forall i = 1, \ldots, d$.

The proof of Theorem \ref{thm:invPW} follows closely the proof  of Theorem \ref{thm:inv} and is therefore omitted. The only difference is related to the lack of differentiability of $R_M(x)$ which gives $\dot{R}_M(x(t)) \in \left\langle \frac{\partial R_M(x(t))}{\partial x} , f(x(t))\right\rangle$. Therefore provided \eqref{eq:Rineqratcomp} holds we can use it to arrive at a contradiction as in the proof of Claim (I) of Theorem \ref{thm:inv}.

The proposition below parallels Proposition~\ref{prop:ineq} and is presented without proof. It introduces  sufficient conditions to satisfy the conditions of Theorem~\ref{thm:invPW}. These conditions are formulated in terms of inequalities and the description of the set $\partial \cE(R_M, \gamma)$. 
\begin{proposition}\label{prop:ineqPW}
Given  $R_i: \R^n \rightarrow \R$, $R_i\in \cC^{1}$ $i = 1, \ldots, d$ and a positive scalar~$\gamma$, if there exist $V_N: \R^n \rightarrow \R$, $V_N \in \mathcal{C}^{1}$, $V_N(0) = 0$, and $m_0 : \R^n \rightarrow \R_{\geq 0 } $, $m_1 : \R^n \rightarrow \R_{\geq 0 } $, $p : \R^n \rightarrow \R$ such that 
\begin{subequations}
\label{eq:propstabPW}
\begin{eqnarray}
\cE(R_M,\gamma)~\mbox{is compact},~~0 \in \cE(R_M,\gamma)\label{eq:compactnessSetPW} \\
-\left\langle \xi,  f(x) \right \rangle > p(x)(\gamma - R_M(x) ) \quad \forall \xi \in \frac{\partial R_M(x)}{\partial x} \label{eq:RineqSetPW} \\
V_N(x) > m_0(x)(\gamma - R_M(x)) \label{eq:VposineqSetPW}\\
-\left\langle \nabla V_N(x), f(x)\right \rangle  > m_1(x)(\gamma - R_M(x)) \label{eq:VdotineqSetPW}
\end{eqnarray}
\end{subequations}

\noindent
then~$\mathcal{E}(R_M, \gamma)$ is an ERA of the origin. If~\eqref{eq:compactnessSetPW},~\eqref{eq:VposineqSetPW}, \eqref{eq:RineqSetPW} hold and there exist $m_2 : \R^n \rightarrow \R_{\geq 0 } $ such that 
\begin{equation}
-\left\langle \nabla V_N(x) , f(x)\right \rangle + V_N(x)p(x) > m_2(x)(\gamma - R_M(x))
\label{eq:VratdotineqSetPW}
\end{equation}
hold, then~\eqref{eq:LyapRationalcomp} is a LF for~\eqref{eq:sys} in the set $\mathcal{E}^{\circ}(R_M, \gamma)$.
\end{proposition}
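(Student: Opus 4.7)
The plan is to mirror the proof of Proposition~\ref{prop:ineq}, transferring each implication to the piecewise setting and verifying that each step remains valid when $\nabla R$ is replaced by elements of the subdifferential $\partial R_M(x)/\partial x$.

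First, I would observe that since $m_0, m_1 : \R^n \rightarrow \R_{\geq 0}$ and $\gamma - R_M(x) \geq 0$ on $\cE(R_M, \gamma)$, the right-hand sides of \eqref{eq:VposineqSetPW} and \eqref{eq:VdotineqSetPW} are non-negative on this set. Consequently these inequalities imply the strict bounds \eqref{eq:Vposineqratcomp} and \eqref{eq:Vdotineqratcomp} required by Theorem~\ref{thm:invPW}. On the boundary $\partial \cE(R_M, \gamma)$ we have $R_M(x) = \gamma$, so the factor $(\gamma - R_M(x))$ in \eqref{eq:RineqSetPW} vanishes; because \eqref{eq:RineqSetPW} is posited uniformly for every $\xi \in \partial R_M(x)/\partial x$, it specialises directly to \eqref{eq:Rineqratcomp}. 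Claim~(I) of Theorem~\ref{thm:invPW} then certifies $\mathcal{E}(R_M, \gamma)$ as an ERA of the origin.

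For Claim~(II), I would compute the time derivative of $V(x) = V_N(x)/(\gamma - R_M(x))$ along trajectories. For any $\xi \in \partial R_M(x)/\partial x$,
\begin{equation*}
-\dot V(x) = -\frac{\langle \nabla V_N(x), f(x)\rangle}{\gamma - R_M(x)} - \frac{V_N(x)\langle \xi, f(x)\rangle}{(\gamma - R_M(x))^2}.
\end{equation*}
Using that $V_N(x) > 0$ on $\mathcal{E}(R_M,\gamma)\setminus\{0\}$ (a consequence of \eqref{eq:VposineqSetPW}) and the uniform lower bound \eqref{eq:RineqSetPW}, the second term is bounded below by $V_N(x)p(x)/(\gamma - R_M(x))$, giving
\begin{equation*}
-\dot V(x) > \frac{-\langle \nabla V_N(x), f(x)\rangle + V_N(x) p(x)}{\gamma - R_M(x)}.
\end{equation*}
Invoking \eqref{eq:VratdotineqSetPW} together with $m_2 \geq 0$ then yields $-\dot V(x) > 0$ on $\mathcal{E}^{\circ}(R_M,\gamma)\setminus\{0\}$, which is precisely \eqref{eq:Vratdotineqcomp}. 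Applying Claim~(II) of Theorem~\ref{thm:invPW} produces the Lyapunov function $V$ and delivers $\mathcal{E}^{\circ}(R_M,\gamma)$ as an ERA.

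The main obstacle is the non-differentiability of $R_M$ at points where several smooth pieces are simultaneously active. The resolution, already used in Theorem~\ref{thm:invPW}, is to work with the subdifferential: along any trajectory, $\dot R_M(x(t))$ belongs to $\langle \partial R_M(x(t))/\partial x , f(x(t))\rangle$, and the hypothesis \eqref{eq:RineqSetPW}, which holds uniformly for every $\xi$ in this convex hull of active gradients, is exactly the strengthening needed so that the chain of inequalities carries through unchanged. Once this is in place, the remaining manipulations are algebraically identical to those in Proposition~\ref{prop:ineq}.
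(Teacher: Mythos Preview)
Your argument is correct and follows precisely the route the paper intends: the proposition is stated as a direct parallel of Proposition~\ref{prop:ineq} and is presented without proof, and your write-up reproduces that proof with $R$ replaced by $R_M$ and $\nabla R$ by elements of the subdifferential, exactly as the paper's remark after Theorem~\ref{thm:invPW} indicates. The handling of the non-smooth points via the uniform hypothesis over all $\xi\in\partial R_M(x)/\partial x$ is the right resolution and matches the mechanism already invoked in Theorem~\ref{thm:invPW}.
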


\subsection{Uncertain systems}

Consider uncertain dynamical systems of the form
\begin{equation}\label{eq:uncertain}
\dot{x} = f(x,\theta),\quad \theta \in \Theta \subset \R^{n_p}
\end{equation}
where $f: \cD \times \Theta \rightarrow \R^{n}$ and $\Theta$ denotes the uncertainty set. We assume that $f$ satisfies conditions so as to provide uniqueness and local existence of solutions\footnote{This requires $f$ to be continuous in $(x,\theta,t)$ and locally Lipschitz in $x$ (uniformly in $\theta$ and $t$) on a bounded domain. Exact conditions can be found in \cite[Theorem 3.5]{Kha02}}. We shall also assume that $x^{*}=0$ is the equilibrium of interest, and require that $f(0,\theta)=0$  $\forall \theta \in \Theta$. 

We are interested in determining a robust estimate for region of attraction \textit{i.e.} an estimate of the RAs for all dynamical systems of the form \eqref{eq:uncertain} with a fixed $\theta\in\Theta$
\begin{equation*} 
\cS_{\theta} := \left\{  x_{0}\in \R^n \text{ }\left| \begin{array}{c} \text{ }\phi(t,x,\theta) \text{ is defined }\forall t \ge0 \\ 
\lim_{t \rightarrow \infty}\phi(t,x_{0},\theta) = x^{*}, \forall \theta \in \Theta \end{array}\right. \right\},
\end{equation*}
where $\phi(t,x_{0},\theta)$ is a solution to~\eqref{eq:uncertain} starting from $x_{0}$ at time $t$ with fixed $\theta\in \Theta$. We establish conditions for positively invariant sets to be estimates of the region of attraction for parametrically uncertain systems. Whilst we will consider parameter dependent Lyapunov functions (PDLFs), our ERAs will be defined by positively invariant sets which are not dependent on the system parameters. 

PDLFs have been shown to be an effective tool for certifying the stability of linear system with parametric uncertainties \cite{GahAC96,OP07,Bli04}. They have also been successfully applied to obtain certificates for the local stability of polynomial systems leading to parameter-dependent estimates of the RA. In those results, a robust estimate is then obtained as the intersection of the estimates given for each fixed parameter value~\cite{Che04a,TPSB10}, which is in contrast to the result in this section where the estimate does not depend on the parameters but the Lyapunov function does, thus avoiding the computation of the intersection of the parametrised estimates.


The following result extends Theorem \ref{thm:inv} to the case of uncertain systems of the form~\eqref{eq:uncertain}.
\begin{theorem}
\label{thm:invUnc}
Consider the uncertain dynamical system described by \eqref{eq:uncertain} where $\Theta$ is a compact set and  $x^*=0$. Given a function $R: \R^n \rightarrow \R$, $R \in \cC^1$ and  a positive scalar $\gamma$, which defines a compact set $\mathcal{E}(R, \gamma)$, and satisfy 
\begin{equation}
-\left\langle \nabla R(x),  f(x,\theta) \right \rangle  > 0  \ \forall (x,\theta) \in  \partial \cE(R,\gamma) \times \Theta,   \label{eq:RineqUnc} 
\end{equation}
if there exists a function $V_N: \R^n \times \Theta~\rightarrow~\R$, $V_N(0, \cdot) = 0$, $V_N \in \cC^1$   such that
\begin{equation}
V_N(x, \theta) >0 \ \forall  (x,\theta) \in  \cE(R,\gamma) \setminus \{0\} \times \Theta \label{eq:VposineqUnc}
\end{equation}
\begin{equation*}
-\left\langle \nabla V_N(x,\theta), f(x,\theta)\right \rangle  >0 \ \forall (x,\theta) \in  \cE(R,\gamma) \setminus \{0\} \times \Theta  \nonumber
\end{equation*}
\noindent
then  the solutions to \eqref{eq:uncertain}, $\phi(t,x_{0},\theta)$ for any  $x_{0}\in\cE(R,\gamma)$ and $\theta \in \Theta$ lie in the set $\cS_\theta$ with respect to  $x^*=0$.

Moreover, if \eqref{eq:VposineqUnc}, and 
\begin{multline}
-\left\langle \left[ \nabla V_N(x, \theta)(\gamma - R(x)) + V_N(x,\theta)\nabla R(x) \right], f(x)\right \rangle  >0 \\ \forall (x,\theta) \in  \partial \cE(R,\gamma) \times \Theta
\label{eq:VratdotineqUnc}
\end{multline}
hold, and there exists a positive scalar $\bar{\epsilon}$ such that $0 \notin \cE(R,\gamma ) \setminus \cE^\circ(R,\gamma-\bar{\epsilon}) $ then the function
\begin{equation}
\label{eq:LyapRationalUnc}
V(x,\theta) = \dfrac{V_N(x,\theta)}{\gamma - R(x)}
\end{equation}
is a Lyapunov function for~\eqref{eq:uncertain} for all $\theta \in \Theta$ in the set $\mathcal{E}^{\circ}(R, \gamma)$.
\end{theorem}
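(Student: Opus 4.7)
The plan is to freeze the parameter and apply Theorem~\ref{thm:inv} pointwise in $\theta$. Fix an arbitrary $\theta_0\in\Theta$ and consider the deterministic system $\dot x = f(x,\theta_0)$ with auxiliary function $R$ (which is $\theta$--independent) and Lyapunov candidate $V_N(\cdot,\theta_0)$, where $\nabla$ is understood to be the gradient in $x$. Since $\cE(R,\gamma)$ depends only on $x$, the compactness hypothesis of Theorem~\ref{thm:inv} and $0\in\cE(R,\gamma)$ are inherited directly. Specialising \eqref{eq:RineqUnc} to $\theta=\theta_0$ gives \eqref{eq:Rineq} for $f(\cdot,\theta_0)$, while \eqref{eq:VposineqUnc} and the derivative condition restricted to $\theta_0$ yield \eqref{eq:Vposineq} and \eqref{eq:Vdotineq} for $V_N(\cdot,\theta_0)$; the latter is $\cC^1$ in $x$ by the joint regularity assumed on $V_N$. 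Theorem~\ref{thm:inv}(I) then gives $\cE(R,\gamma)\subseteq\cS_{\theta_0}$, and since $\theta_0$ was arbitrary, the first claim follows---with the important structural feature that the ERA $\cE(R,\gamma)$ is a single, parameter-independent set, whereas the Lyapunov certificate is allowed to depend on $\theta$.

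For the second part, fix $\theta_0\in\Theta$ again. The compactness of $\cE(R,\gamma)$ and the positivity \eqref{eq:VposineqUnc} evaluated at $\theta_0$ are in hand; specialising \eqref{eq:VratdotineqUnc} at $\theta_0$ yields \eqref{eq:Vratdotineq} for the pair $(V_N(\cdot,\theta_0),f(\cdot,\theta_0))$; and the $\bar{\epsilon}$ condition involves only $R$ and $\gamma$, so it carries over verbatim. Theorem~\ref{thm:inv}(II) therefore concludes that
\begin{equation*}
V(x,\theta_0)=\dfrac{V_N(x,\theta_0)}{\gamma-R(x)}
\end{equation*}
is a Lyapunov function for $\dot x=f(x,\theta_0)$ on $\cE^\circ(R,\gamma)$. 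Arbitrariness of $\theta_0$ then gives the claimed family of Lyapunov functions parametrised by $\theta\in\Theta$.

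The reduction is essentially mechanical: the substantive work---building a contractive sublevel set of $V_N$ inside $\cE(R,\gamma)$, proving positive invariance of $\cE(R,\gamma)$ via the boundary contradiction argument based on \eqref{eq:Rineq}, and the $\epsilon$--$\delta$ limiting procedure that turns $\cE(R,\gamma-\epsilon\delta)$ into the open estimate $\cE^\circ(R,\gamma)$---has already been carried out in the proof of Theorem~\ref{thm:inv} and applies verbatim for each frozen $\theta$. The only point requiring care is that the auxiliary constants $\alpha$, $\beta$, $\lambda$, $\delta$ arising in that proof now generally depend on $\theta$, but this causes no difficulty because the present conclusions are pointwise in $\theta$ rather than uniform. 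Compactness of $\Theta$ is consequently not actively used in the reduction itself; it would, however, be essential if one wished to extract a uniform-in-$\theta$ convergence rate or to construct a single $\theta$-independent contractive sublevel set.
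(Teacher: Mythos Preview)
Your approach---freeze $\theta$ and invoke Theorem~\ref{thm:inv} pointwise---is exactly what the paper intends: it simply states that the proof ``is similar to that of Theorem~\ref{thm:inv} and so is omitted.'' Your observation that compactness of $\Theta$ is not actually used in the pointwise argument is correct and worth noting.

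One technical caveat: as printed, condition~\eqref{eq:VratdotineqUnc} is stated only on $\partial\cE(R,\gamma)\times\Theta$, whereas the corresponding hypothesis~\eqref{eq:Vratdotineq} in Theorem~\ref{thm:inv} is required on $\cE(R,\gamma)\setminus\{0\}$. Your sentence ``specialising \eqref{eq:VratdotineqUnc} at $\theta_0$ yields \eqref{eq:Vratdotineq}'' therefore does not go through verbatim: the boundary inequality alone does not give the interior inequality needed to run Part~(II) of Theorem~\ref{thm:inv}. This is almost certainly a typographical slip in the theorem statement (note also that $f(x)$ should read $f(x,\theta)$ in \eqref{eq:VratdotineqUnc}); the intended hypothesis is the inequality on $(\cE(R,\gamma)\setminus\{0\})\times\Theta$, under which your reduction is complete.
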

The proof is similar to that of Theorem \ref{thm:inv} and so is omitted. 

\section{Computational Methods for Estimating the~RA}
\label{sec:ERAviaLyap}
We now present a computational method for constructing positively invariant estimates of the RA. First  a method for estimating the RA via maximal Lyapunov sets is reported and then algorithms that implement the main results of the paper are described.
\subsection{Maximal Lyapunov Sets}
Recall that for a locally asymptotically stable equilibrium point of~(\ref{eq:sys}), converse Lyapunov theorems tells us there  exist a Lyapunov function $V$ and a set $\cD\subset \R^{n}$, ${0} \in \cD$, satisfying $V:\cD\rightarrow \R$ such that $V(x)>0$ $\forall x\in \cD\setminus \left\{ 0\right\}$, $V(0)=0$ and $\left\langle \nabla V(x), f(x)\right\rangle<0$ $\forall  x\in \cD\setminus \left\{ 0\right\}$, see for example \cite[Section 4.7]{Kha02}. Based on this fact, a common (but conservative) approach to finding an ERA for systems of the form~(\ref{eq:sys}) is to compute a Lyapunov function certifying the local asymptotic stability of the origin and obtain the largest (maximal) level set of a Lyapunov function that is contained within the set~$\cD$  which the LF is constructed on. We can describe a general algorithm based on the Lyapunov function computations to obtain ERAs in the form $\cE(V,\gamma)$  as follows:\\ \\
\textbf{\underline{Algorithm 1}}\\
\textbf{Input} $k = 0$, a compact set $\cD_0$, $\{0\}\in \cD_0$.\\
\textbf{Step 1 }\textit{(Lyapunov function computation):} Given $\cD_k$, compute a Lyapunov function $V_k$ for (\ref{eq:sys}).\\
\textbf{Step 2 }\textit{(Maximization of Lyapunov level set):} Given $V_k$ and $\cD_k$, solve 
\begin{equation}
\text{maximize}\  \gamma \quad \text{subject to} \ \cE(V_k,\gamma)\subset \cD_k, \ \gamma >0
\label{eq:alg1opt}
\end{equation}
\textbf{Step 3 }\textit{(Update of search domain):} If stopping criteria is satisfied then return $\cE(V_k, \gamma^*)$, with $\gamma^*$ the solution to~\eqref{eq:alg1opt}, as the ERA else specify ~$\cD_{k+1}$, set $k \leftarrow (k+1)$ go to Step 1. \hfill$\blacksquare$

Whilst the algorithm above may look simple enough, observe that: i) in general, constructing Lyapunov functions for nonlinear dynamical systems is a non-trivial task. ii) Existing formulations for the optimization problem in Step~2 are typically non-convex. iii) Determining the update for $\cD_{k+1}$ typically relies on some heuristic. As a general rule, by necessity $\left\{ 0 \right\} \in \cD_{k+1}$, the set should be connected, and should contain points that are not already in $\cD_{k}$. In Section \ref{sec:poly} we will specialise the above algorithm and explicitly describe how to update the search domain, and in the case of polynomial systems construct all the required functions. 


Step 1 asks for $\left\langle \nabla V_k(x), f(x)\right\rangle<0 \ \forall x \in \cD_k$.  In general there is no guarantee that every $x(0) \in \cD_k$ satisfies $\phi(t, x(0)) \in \cD_k \ \forall t>0$. Extra conditions must hold for $\cD_k$ to be a positively invariant set. We will expand upon this point in the next section.
 
Denote the ERA and the search domain from iteration~$k$ of the above algorithm by $\cE(V_{k},\gamma_{k})$ and $\cD_k$ respectively. Note that $\cD_k\subset \cD_{k+1}$ does not necessarily guarantee $\cE(V_{k},\gamma_{k}) \subset \cE(V_{k+1},\gamma_{k+1})$. Satisfying such constraints  is of desirable as it guarantees improvement of the ERA.  For polynomial systems, we describe next how such criteria can be enforced. 

\subsection{Estimating the RA With Invariant Sets}\label{sec:ERAinvariant}
We now illustrate how Theorem~\ref{thm:inv}, Claim (I) can be implemented in an algorithmic manner to obtain estimates for the RA. We start by  presenting a generic algorithm, analogous to Algorithm~1 with the exception that the obtained ERA is not given by Lyapunov level sets:\\ \\
\textbf{\underline{Algorithm 2}}\\
\textbf{Input}: $k=0$,  a function $R_0$ satisfying the~\eqref{eq:compactness} in Theorem~\ref{thm:inv}.\\
\textbf{Step 1 }(\textit{Invariant set enlargement}): Maximize $\gamma$ subject to \eqref{eq:Rineq}, \eqref{eq:thmstab} with $R = R_k$.\\
\textbf{Step 2 }(\textit{Update function $R$}): If stopping criteria is satisfied then return ERA given by the set $\cE(R_k,\gamma)$; else compute  $R^*$ satisfying $\cE(R^*,\delta)\supseteq \cE(R_k,\gamma^*)$ , where~$\gamma^*$ is the optimal solution to Step 1 and $\delta>0$. Set $k\leftarrow k+1$;  $\gamma \leftarrow \delta$ ; $R_k \leftarrow R^*$; go to Step 1.\hfill$\blacksquare$


Note that any function $R_0$ that satisfies \eqref{eq:R} can be used to initiate Algorithm 2 provided the set $\cE(R_0,\gamma)$ is invariant. A straightforward choice for $R_0$ is any Lyapunov function $V$, which provides a (possibly arbitrarily small) level set  $\cE(V,\gamma)$ which is \emph{invariant} and also contractive (although such a property is not required for $\cE(R_0,\gamma)$). We take such an approach  in the examples  in Section~\ref{sec:examples}.

Observe  that the update of function $R$ parallels the update of the domain in Algorithm 1, with the difference that it defines an ERA itself. In the next subsection we describe a specific strategy for the update $R^*$ of function $R$ from step~2. 

\subsection{Polynomial systems}\label{sec:poly}
For the remainder of the paper it is assumed that the vector field $f$ in \eqref{eq:sys} and the functions $V_N$, $R$ and $R_M$ in Theorems~\ref{thm:inv} and \ref{thm:invPW} are multivariate polynomials. For this class of systems,~\cite{Che11} presents a comprehensive set of results on estimates of the domain of attraction with polynomial Lyapunov functions with Sum-of-Squares based approaches~\cite{BlePT13}.

The following corollary to Proposition \ref{prop:ineq} provides sufficient conditions for Theorem \ref{thm:inv} to hold which are verifiable using convex optimization.

\begin{corollary}
Let $V_N$ and $R$ be given multivariable polynomials and $\gamma$ a given positive constant. Then, if there exist sum-of-squares polynomials $m_0,m_1$ and a polynomial $p$ such that 
\begin{subequations}\label{eq:psatz_thm1}
\begin{eqnarray}
-\left\langle \nabla R(x),f(x) \right\rangle -p(x)(\gamma - R(x))\in \Sigma[x] \label{eq:psatz_thm1d} \\
V_N(x)-m_{0}(x)(\gamma - R(x))\in \Sigma[x] \label{eq:psatz_thm1a}  \\
-\left\langle \nabla V_N(x),f(x) \right\rangle -m_{1}(x)(\gamma - R(x))\in \Sigma[x] \label{eq:psatz_thm1b} 
\end{eqnarray}
\end{subequations}
then the inequalities  \eqref{eq:propstab}  are satisfied and~$\mathcal{E}(R, \gamma)$ is an ERA of the origin. If \eqref{eq:psatz_thm1d},~\eqref{eq:psatz_thm1a} hold and there exist a sum-of-squares polynomial $m_2 $ such that 
\begin{equation}
-\left\langle \nabla V_N(x),   f(x) \right\rangle +  V_N(x)p(x) -m_{2}(x)(\gamma - R(x))\in \Sigma[x]\label{eq:psatz_thm1c}
\end{equation}
hold, then~\eqref{eq:LyapRational} is a LF for~\eqref{eq:sys} in the set $\mathcal{E}^{\circ}(R, \gamma)$.
\end{corollary}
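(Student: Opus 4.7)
The plan is essentially a one-step translation from algebraic identities (SOS decompositions) to pointwise polynomial inequalities, followed by a direct appeal to Proposition~\ref{prop:ineq}. The conceptual content is minimal; the work is bookkeeping each hypothesis against the corresponding inequality of Proposition~\ref{prop:ineq}.

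First I would recall the basic fact that any $q \in \Sigma[x]$ satisfies $q(x) \geq 0$ for every $x \in \R^n$, since $q$ admits a representation $q(x) = \sum_i g_i^2(x)$. Applied to \eqref{eq:psatz_thm1d}, this immediately gives
\begin{equation*}
-\langle \nabla R(x), f(x) \rangle \;\geq\; p(x)(\gamma - R(x)) \qquad \forall x \in \R^n,
\end{equation*}
which in particular holds on $\cD$, yielding \eqref{eq:RineqSet}. The same reasoning applied to \eqref{eq:psatz_thm1a} and \eqref{eq:psatz_thm1b}, together with the fact that $m_0, m_1 \in \Sigma[x]$ are themselves non-negative (so they meet the sign requirement in Proposition~\ref{prop:ineq}), delivers \eqref{eq:VposineqSet} and \eqref{eq:VdotineqSet} pointwise on $\R^n$, and hence on $\cD'$. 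The hypothesis $V_N(0)=0$ follows by assumption on $V_N$ and the choice of representation, so all premises of Proposition~\ref{prop:ineq} for Claim~(I) are in force, and $\cE(R,\gamma)$ is an ERA of the origin.

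For the rational Lyapunov function claim, I would repeat the same translation for \eqref{eq:psatz_thm1c}: membership in $\Sigma[x]$ gives
\begin{equation*}
-\langle \nabla V_N(x), f(x) \rangle + V_N(x) p(x) \;\geq\; m_2(x)(\gamma - R(x)) \qquad \forall x \in \R^n,
\end{equation*}
which is precisely \eqref{eq:VratdotineqSet} with $m_2 \geq 0$ provided by its SOS status. Combined with \eqref{eq:psatz_thm1d} and \eqref{eq:psatz_thm1a} already established, the hypotheses of Proposition~\ref{prop:ineq} for its second claim are met, so \eqref{eq:LyapRational} is a Lyapunov function on $\cE^\circ(R,\gamma)$.

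The only subtlety—and the one place where the argument is not purely mechanical—is the gap between the strict inequalities appearing in Proposition~\ref{prop:ineq} and the non-strict inequalities that SOS membership yields. I would handle this in the same spirit as standard Positivstellensatz-based SOS formulations: the strictness is understood to be enforced either by a small positive definite slack (absorbed into $p$, $m_0$, $m_1$, $m_2$) or, equivalently, by noting that the SOS certificates can be perturbed by an arbitrarily small positive multiple of $\|x\|^{2k}$ at the origin without altering feasibility, thereby yielding strict positivity on $\cD'$. No other step requires more than substitution of the SOS decomposition into the corresponding inequality of Proposition~\ref{prop:ineq}.
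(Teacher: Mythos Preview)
Your proposal is correct and matches the paper's approach: the paper states this result as an immediate corollary of Proposition~\ref{prop:ineq} without supplying a separate proof, relying on the same one-step translation (SOS $\Rightarrow$ pointwise non-negativity $\Rightarrow$ inequalities~\eqref{eq:propstab} and~\eqref{eq:VratdotineqSet}) that you spell out. Your explicit remark on the strict-versus-non-strict gap is a point the paper leaves implicit, so if anything you are slightly more careful than the original.
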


As proven in Proposition~\ref{prop:ineq}, the fact that \eqref{eq:psatz_thm1d} and \eqref{eq:psatz_thm1c} hold is a sufficient condition for~\eqref{eq:Vratdotineq} and, if satisfied, it certifies that the rational function {$V = \frac{V_N}{\gamma-R}$} is a Lyapunov function on the set $\cE^{\circ}(R,\gamma)$. 

Sum-of-squares constraints such as those above can be formulated using freely available software such as SOSTOOLS~\cite{sostools} and solved using a semidefinite programme solver. Note that in~\eqref{eq:psatz_thm1},  $V_N$ and $R$ appear affinely in the SOS constraints, and the only product between $V_N$ and the set of multipliers is in~\eqref{eq:psatz_thm1c} where it multiplies polynomial $p$. This fact is central in developing coordinate-wise search algorithms allowing  $V_N$ to be a variable at all steps of the algorithm iterations. Notice that is only possible because in~\eqref{eq:psatz_thm1} there is no product between $V_N$ and the multipliers $m_0$, $m_1$ and $m_2$. When searching for a rational LF certificate (by imposing~\eqref{eq:psatz_thm1c}), the product $V_N p$ is handled by fixing $p$ from a solution to~\eqref{eq:psatz_thm1d}.

\begin{remark}
Notice that, for a given rational $V = \frac{V_N}{\gamma-R}$, (assuming polynomial dependence of $V_N$ and $R$ on $x$) straightforward formulations to compute the MLS of the function $V$, as the set $\cE(V,C)$, lead to inequalities as
\begin{multline*}
-\dfrac{\left\langle \nabla V_N(x)(\gamma - R(x)) + V_N(x)\nabla R(x), f(x) \right\rangle}{(\gamma - R)^2} \\ \geq m(x)\left( C - \dfrac{V_N(x)}{\gamma - R(x)}\right).
\end{multline*}
with $m(x)\geq0$. In order to avoid the rational inequalities of the form above, one can restrict the attention to the set where $(\gamma - R(x))>0$, thus formulating polynomial inequalities as
\begin{multline}
\label{eq:highdegree}
- \left\langle \nabla V_N(x)(\gamma - R(x)) + V_N(x)\nabla R(x), f(x) \right\rangle  \\ \geq m(x)(C(\gamma - R(x))^2 - V_N(x)(\gamma - R(x)) ).
\end{multline}
Notice also that, in contrast to \eqref{eq:highdegree}, \eqref{eq:psatz_thm1c} does not present products $V_N(x)R(x)$ and $R(x)^2$. Such a property also allows the degree of the SOS constraints in \eqref{eq:psatz_thm1c}  to be lower than a SOS constraint obtained with~\eqref{eq:highdegree}. 
\end{remark}

\begin{remark}
\label{rem:RequalsV}
The analysis of polynomial systems with  polynomial LFs and level sets of the LF as ERAs are a particular case of the conditions  imposed  and are obtained by imposing $R(x) = V_N(x)$, $m_0(x) = 0$, and $p(x) =m_1(x) = m_2(x)$.
\end{remark}

The proof of the following claims are straightforward and, therefore, omitted. 

\begin{corollary}
If \eqref{eq:psatz_thm1d} holds with $p\in \R[x]$, that also satisfies $p\in \Sigma[x]$ (i.e. a sum-of-squares polynomial), then \eqref{eq:psatz_thm1c} is holds true with $m_2=m_1+pm_0$.
\end{corollary}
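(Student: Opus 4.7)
The plan is a direct algebraic rearrangement. I would start by substituting the proposed choice $m_2 = m_1 + p m_0$ into the target expression \eqref{eq:psatz_thm1c}, namely
\begin{equation*}
-\langle \nabla V_N(x), f(x) \rangle + V_N(x) p(x) - (m_1(x) + p(x) m_0(x))(\gamma - R(x)),
\end{equation*}
and then regroup the terms so that each group matches an already-available SOS certificate.

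The key observation is that the above quantity can be rewritten as
\begin{equation*}
\bigl[-\langle \nabla V_N(x), f(x) \rangle - m_1(x)(\gamma - R(x))\bigr] + p(x)\bigl[V_N(x) - m_0(x)(\gamma - R(x))\bigr].
\end{equation*}
The first bracketed expression is SOS by hypothesis \eqref{eq:psatz_thm1b}. The second bracketed expression is SOS by hypothesis \eqref{eq:psatz_thm1a}. Since $p$ is SOS by the additional hypothesis of the corollary, and the product and sum of SOS polynomials are again SOS, the whole expression lies in $\Sigma[x]$, which is exactly \eqref{eq:psatz_thm1c}.

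There is essentially no obstacle here; the only thing to be careful about is that the choice $m_2 = m_1 + p m_0$ is itself SOS (since it is the sum of an SOS polynomial and the product of two SOS polynomials), so it is admissible as a multiplier in \eqref{eq:psatz_thm1c}. The role of hypothesis \eqref{eq:psatz_thm1d} in the corollary is implicit: it is what gives rise to the polynomial $p$ in the first place (as a multiplier certifying positivity of $-\langle \nabla R, f\rangle$ on $\cE(R,\gamma)$), and its SOS-ness is exactly the extra ingredient that allows us to multiply it with the SOS certificate coming from \eqref{eq:psatz_thm1a} without spoiling non-negativity.
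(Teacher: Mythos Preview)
Your argument is correct and is precisely the straightforward algebraic rearrangement the paper has in mind; the paper itself omits the proof as ``straightforward'', and your decomposition into the two bracketed SOS terms (from \eqref{eq:psatz_thm1b} and \eqref{eq:psatz_thm1a}) multiplied by the SOS $p$ is exactly the intended verification. Your remark that $m_2=m_1+pm_0$ is itself SOS, and hence admissible, is a useful point to make explicit.
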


\begin{prop}
Given polynomials $\widehat{R}$, $R$ and a scalar $\gamma>~0$, if there exists a sum-of-squares polynomial $m_3(x)$ such that
\begin{equation}
\label{eq:RinS}
(\gamma -R(x))-m_{3}(x)(\gamma-\widehat{R}(x))\in \Sigma[x],  m_{3}\in \Sigma[x].
\end{equation}
then $\cE(\widehat{R},\gamma)\subseteq \cE(R,\gamma)$.
\end{prop}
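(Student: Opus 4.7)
The plan is a direct pointwise argument using the nonnegativity of sums-of-squares, essentially an S-procedure/Positivstellensatz style containment certificate. The whole proof should be two or three lines once set up.

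First I would unpack the definitions: recall that $\cE(R,\gamma) = \{x \in \R^n \mid R(x) \leq \gamma\}$, equivalently $\{x \mid \gamma - R(x) \geq 0\}$, and similarly for $\cE(\widehat{R},\gamma)$. The goal is to show that $\widehat{R}(x) \leq \gamma$ implies $R(x) \leq \gamma$.

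Next I would fix an arbitrary $x \in \cE(\widehat{R},\gamma)$, so $\gamma - \widehat{R}(x) \geq 0$. The hypothesis says that $(\gamma - R(x)) - m_3(x)(\gamma - \widehat{R}(x))$ is SOS, hence nonnegative at $x$, and separately $m_3(x) \geq 0$ since $m_3 \in \Sigma[x]$. Rearranging the first inequality gives
\begin{equation*}
\gamma - R(x) \;\geq\; m_3(x)\bigl(\gamma - \widehat{R}(x)\bigr) \;\geq\; 0,
\end{equation*}
where the second inequality follows from the nonnegativity of the product of two nonnegative quantities. Hence $R(x) \leq \gamma$, so $x \in \cE(R,\gamma)$. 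Since $x$ was arbitrary, the containment $\cE(\widehat{R},\gamma) \subseteq \cE(R,\gamma)$ follows.

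There is no real obstacle here; the statement is a textbook certificate of set containment and both SOS hypotheses are used in exactly one place each (one to give the inequality between $\gamma - R$ and $m_3(\gamma - \widehat R)$, the other to preserve the sign when multiplying by $\gamma - \widehat R \geq 0$). The only subtlety worth mentioning is that without $m_3 \in \Sigma[x]$ (merely requiring the first line to be SOS) the argument would fail on the set where $m_3(x)$ is negative, which is why both SOS conditions are imposed jointly in \eqref{eq:RinS}.
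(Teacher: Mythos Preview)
Your proof is correct and is precisely the standard S-procedure argument for this type of set-containment certificate. The paper itself states that this claim is straightforward and omits the proof, so there is nothing further to compare against.
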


The constraints \eqref{eq:psatz_thm1} and \eqref{eq:RinS} can be used to formulate the following bilinear sum-of-squares programme 
\begin{align*}
\underset{V_N,R,m_{i},p}{\text{maximize}} \quad &\gamma   \\
\text{subject to}\quad &\text{\eqref{eq:psatz_thm1}, \eqref{eq:RinS}, $m_i \in \Sigma[x], i\in \left\{0,1\right\}$.}
\end{align*}
From~\eqref{eq:RinS}, any solution $\gamma^{*}$ to the above problem guarantees that the set $\cE(R,\gamma^{*})$ is contained in the set $\cE(\widehat{R},\gamma^{*})$ which is an ERA since  conditions in Theorem~\ref{thm:inv} are satisfied when~\eqref{eq:psatz_thm1} is satisfied.

 The algorithm below exploits this fact to specialize Algorithm~2 and obtain ERAs for polynomial systems by solving a sequence of sum-of-squares programmes. \\ \\
\textbf{\underline{Algorithm 3}}\\
\textbf{Input: }$k=0$,  a function $R_0$ satisfying the condition~\eqref{eq:compactness} of~(I) in Theorem~\ref{thm:inv}.\\
\textbf{Step 1: }With $R=R_k$, solve through a line search on~$\gamma$:
\begin{equation*}
\underset{V_N,m_{0},m_{1},p}{\text{maximize}} \quad \gamma \quad \text{subject to \eqref{eq:psatz_thm1a}-\eqref{eq:psatz_thm1d}}.
\end{equation*}
\textbf{Step 2: }If stopping criterion is satisfied, return $\cE(R_k,\gamma)$ else, using $m_{0}, m_{1}$,  $p$ and the optimal $\gamma^{*}$ from Step~1, set $\widehat{R}(x)=R(x)$ and solve with a line search on $\gamma$:
\begin{equation}
\underset{V_N,R, m_{3}}{\text{maximize}} \ \gamma  \text{ subject to \eqref{eq:psatz_thm1}, \eqref{eq:RinS}},  \gamma \geq \gamma^{*},
\label{prob:step2}
\end{equation}
Set $k\leftarrow k+1$; $R_k \leftarrow R^*$ with $R^{*}$ the optimizer of~\eqref{prob:step2}; go to Step 1.   \hfill$\blacksquare$

Notice that the Lyapunov function $V_N$ is a decision variable in every step of the above algorithms. This is not the case if one imposes $R = V_N$ as mentioned in Remark~\ref{rem:RequalsV}. 

Algorithm 3 guarantees a sequence of non-decreasing ERAs and a function $R(x)$ satisfying \eqref{eq:psatz_thm1} is required for its initialisation. This function can be taken as any Lyapunov function satisfying the local stability of the origin, for example a quadratic function of the form $V(x)=x^TPx$ for some positive definite $P$ if the linearized system matrix $A$ is Hurwitz. The estimate obtained from running Algorithm 3 depends on the initial function $R_0$, hence running Algorithm 3 with different initialisations $R_0$ may lead to better estimates, one choice for $R_0$ is the Lyapunov function $V_N$ produced by the algorithm (see Example 2 in Section \ref{sec:examples}).

After using Algorithm 3 to construct an ERA, it is desirable to compute the corresponding rational LF of the form \eqref{eq:LyapRational}. In order to do so, using the multiplier $p$, functions $R$ and $\gamma$ from the final iteration of Algorithm~3 solve the feasibility problem 
\begin{equation*}
\underset{V_N,m_{0},m_{1},m_{2}}{\text{find}} \quad V_N \quad \text{subject to \eqref{eq:psatz_thm1}, \eqref{eq:psatz_thm1c}, $m_i \in \Sigma[x]$,}
\end{equation*}
$i\in \left\lbrace0,1,2 \right\rbrace$. One important assumption in Theorem~\ref{thm:inv} is the compactness of set $\cE(R,\gamma)$. In order to enforce this property when computing $R(x)\in\R[x]$ we impose a constraint of the form
$R(x) \geq c(x) + \kappa \|x\|^{2k}$
where $c(x)\in\R[x]$, $deg(c) \leq 2k-1$, $\kappa \in \real_{\geq 0}$, $k \in \mathbb{N}$.

\section{Numerical Examples} \label{sec:examples}

We now illustrate our results with three numerical examples, we use SOSTOOLS and the semidefinite programme solver SeDuMi~\cite{Stu99}.

\textbf{Example 1: }
Consider system~\eqref{eq:sys}  with 
\begin{equation*}
f(x) = \left[\begin{array}{c}
-0.42 x_1 - 1.05x_2 -2.3x_1^2 - 0.5x_1x_2 - x_1^3\\
1.98x_1 + x_1x_2
\end{array}\right]
\label{eq:traj}
\end{equation*}
which satisfies $f(0) = 0$. This system was studied in~\cite[Example 4]{VV85} in the context of maximal LFs and in~\cite{TA08} with composite Lyapunov functions. It describes a Lotka-Volterra system with its stable equilibrium point translated to the origin.  

With the initial function $R$ obtained from the ERA from~\cite{VV85}, we apply Algorithm~3 (thus allowing the Lyapunov function $V_N$ to be a variable at each step).  With the obtained positively invariant set $\cE(R,\gamma)$ defining the ERA and a multiplier $p$ solving~\eqref{eq:psatz_thm1d}, we compute $V_N$ satisfying \eqref{eq:psatz_thm1a}, \eqref{eq:psatz_thm1b}, \eqref{eq:psatz_thm1c} and $m_i \in \Sigma[x]$, $i \in \{0,1,2\}$ thus yielding a rational LF as~\eqref{eq:LyapRational}. The boundary of the ERA and a sequence of nested level sets of the rational LF $V(x)$ are depicted in Figure \ref{fig:MLFlevelsets}. Figure~\ref{fig:MLFlevelVRdot} depicts the sets $\{ x |\dot{V}_N = 0\}$ and the set $\{ x |\dot{R} = 0\}$,  illustrating that constraints \eqref{eq:psatz_thm1b} and \eqref{eq:psatz_thm1d} hold, that is, the intersection of $\partial\cE(R,\gamma)$ with $\{ x |\dot{R} \geq 0\}$ is empty.  For comparison purposes, Figure~\ref{fig:MLFlevelVndot} depicts the ERA,  the maximal level set obtained with the numerator $V_N$, which is strictly contained in the ERA and the set $\{ x |\dot{V}_N = 0\}$. This feature illustrates the conservativeness of the estimate obtained by computing the MLS of a given polynomial LF.  
\begin{figure}[!htb]
\begin{psfrags}
     \psfrag{x2}[r][r][1][-90]{\footnotesize $x_2$}
     \psfrag{x1}[l][l]{\footnotesize $x_1$}
\epsfxsize=7.5cm
\centerline{\epsffile{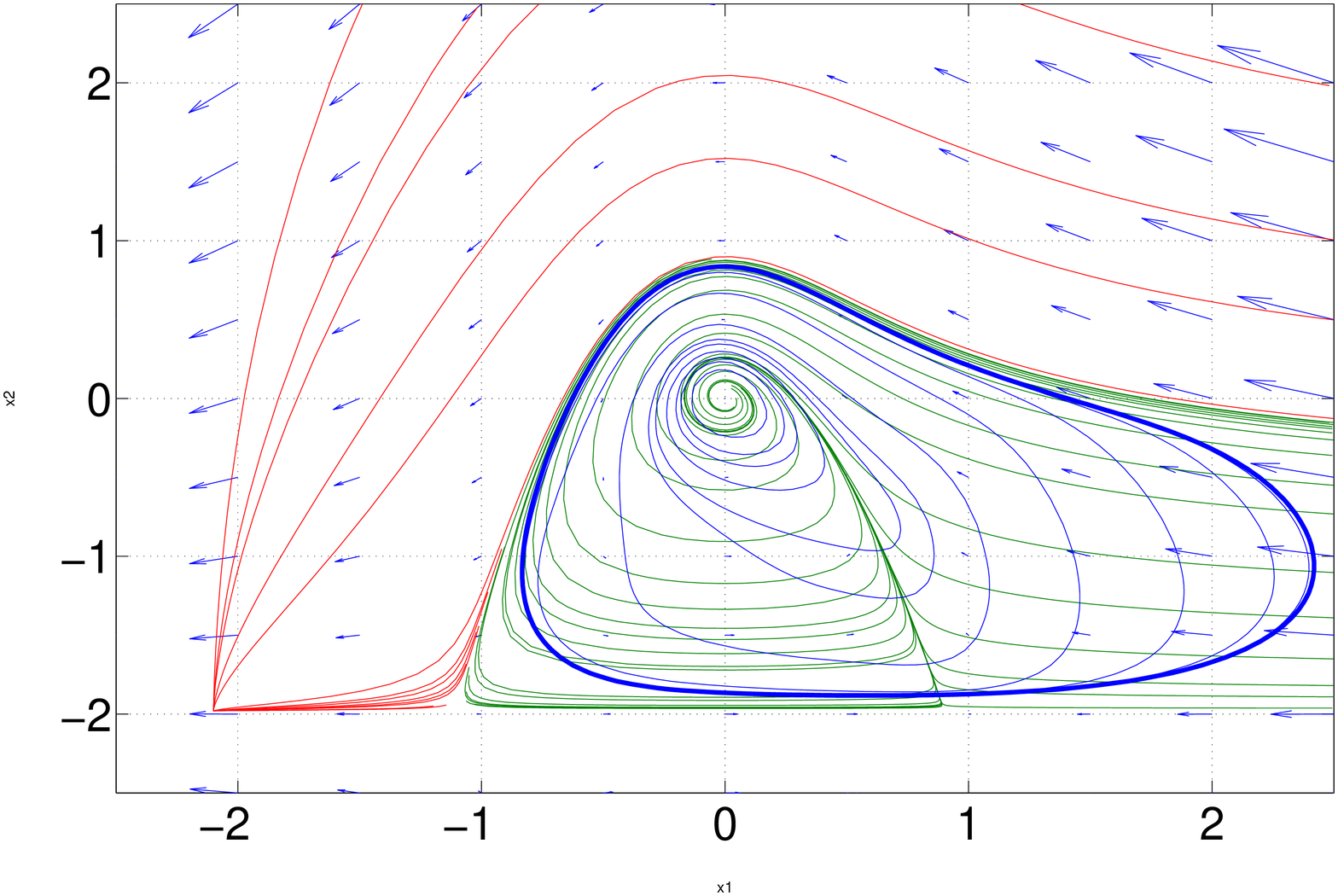}}
\end{psfrags}
\caption{Dark blue line dark depicts the boundary of the ERA, i.e. the set $\partial \cE(R,\gamma)$. Level sets of the function $V(x)$ are also depicted. Trajectories depicted in green converge to the origin.
\label{fig:MLFlevelsets}}
\end{figure}
\begin{figure}[!htb]
\begin{psfrags}
     \psfrag{x2}[r][r][1][-90]{\footnotesize $x_2$}
     \psfrag{x1}[l][l]{\footnotesize $x_1$}
\epsfxsize=7.5cm
\centerline{\epsffile{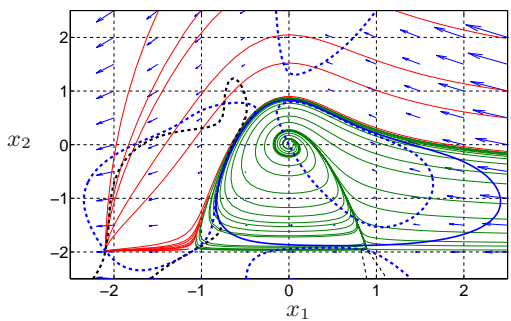}}
\end{psfrags}
\caption{The dashed light blue lines depict the set of points satisfying $\dot{R}(x) = \langle \nabla R(x), f(x)\rangle = 0$,  the dashed black lines depict the set $\dot{V}(x) =\langle \nabla V(x), f(x)\rangle = 0$. The boundary of the RA lies in the set where $\dot{R}(x)$ is negative.
\label{fig:MLFlevelVRdot}}
\end{figure}
\begin{figure}[!htb]
\begin{psfrags}
     \psfrag{x2}[r][r][1][-90]{\footnotesize $x_2$}
     \psfrag{x1}[l][l]{\footnotesize $x_1$}
\epsfxsize=7.5cm
\centerline{\epsffile{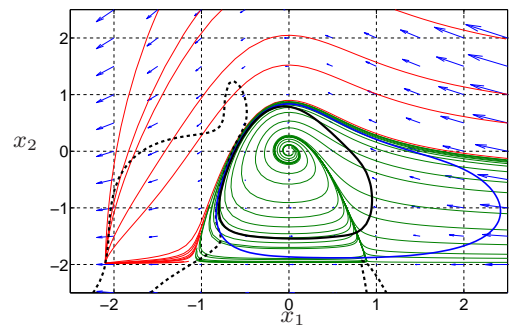}}
\end{psfrags}
\caption{The boundary of the ERA,  $\partial\cE(R,\gamma)$, is depicted in dark blue. The dashed black lines depict the set $\dot{V}_N(x) = 0$. The solid black line depicts the MLS obtained with $V_N$ as the LF. 
\label{fig:MLFlevelVndot}}
\end{figure}

\textbf{Example 2: }The following three-dimensional system from~\cite[Example 5]{VV85} presents a limit cycle and an stable equilibrium at the origin:
\begin{equation*}
f(x) = \left[\begin{array}{c}
-x_2\\
-x_3 \\
-0.915x_1 + (1 - 0.915x_1^2)x_2-x_3 
\end{array}\right].
\label{eq:traj2}
\end{equation*}
We apply Algorithm~3 starting with $R$ given by a quadratic LF for the linearised system and set $deg(V_N) = 4$. We then use the obtained LF, $V_N$, as the initial invariant set function $R$ and apply Algorithm~3 again. The obtained ERA of degree two and degree four are depicted in Figure~\ref{fig:maxRi3d}. We were unable to find  multipliers $p$ and $m_2$ that satisfy \eqref{eq:psatz_thm1c} for the computed $V_N$ and $R$, hence we could not construct a rational LF certificate of the form~\eqref{eq:LyapRational}.


\begin{figure}[!htb]
\begin{psfrags}
     \psfrag{x3}[r][r][1][-90]{\footnotesize $x_3$}
     \psfrag{x1}[l][l]{\footnotesize $x_1$}
     \psfrag{x2}[l][l]{\footnotesize $x_2$}     
\epsfxsize=8cm
\centerline{\epsffile{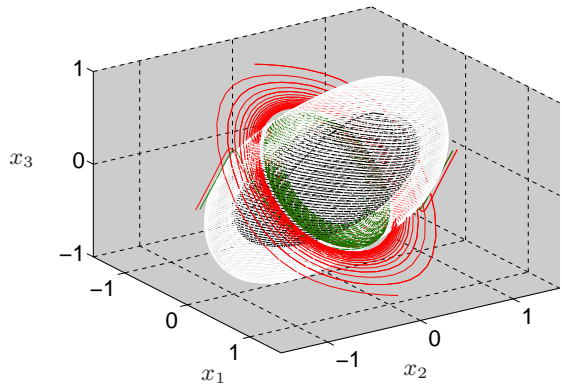}}
\end{psfrags}
\caption{The boundary of the ERA, $\partial \cE(R,\gamma)$, of degree $deg(R) = 4$ corresponds to the white surface while the black surface corresponds to the largest ERA obtained with $deg(R) =2$. Trajectories  are depicted in green (converging) and red (diverging). 
\label{fig:maxRi3d}}
\end{figure}

\textbf{Example 3: }In the following example we compute a piece-wise positively invariant set. Consider now system~\eqref{eq:sys}  with  
\begin{equation}
f(x) = \left[\begin{array}{c}
-x_1(1 -x_1x_2)\\
-x_2
\end{array}\right].
\label{eq:traj2}
\end{equation}
Despite the fact that only the origin is an equilibrium point, its RA is not the whole of $\R^n$ nor is its boundary defined by a limit-cycle. The boundary of the RA is given by $\{x\in \R^n | x_1x_2 = 2 \}$ (obtained from analytical solution to Zubov's equation~\cite[p.73]{Zub64}). 

We fix the shape of the positively invariant sets by fixing $R_M$ as in~\eqref{eq:Rmax}, $d = 2$ with $R_1 = \frac{1}{200}(x_1^2 - 2x_1x_2+ x_2^2),  R_2 = 2x_1x_2$,
and compute $V_N$, $deg(V_N)~=~6$ to define a rational LF $V = \frac{V_N}{\gamma - R_M}$.  We formulate SOS constraints analogous to~\eqref{eq:psatz_thm1} for conditions in Theorem~\ref{thm:invPW}. When solving the constraints we keep $R_M$ constant, i.e.  only the multipliers $m_i$, $p$ and the LF function $V_N$ are updated while increasing $\gamma$.  As a final step, we solve constraints with fixed $\gamma$, $p$, $R_M$ to obtain a rational LF of which the level sets  are depicted in Figure \ref{fig:maxRi}.  

\begin{figure}[!htb]
\begin{psfrags}
     \psfrag{x2}[r][r][1][-90]{\footnotesize $x_2$}
     \psfrag{x1}[l][l]{\footnotesize $x_1$}
\epsfxsize=8cm
\centerline{\epsffile{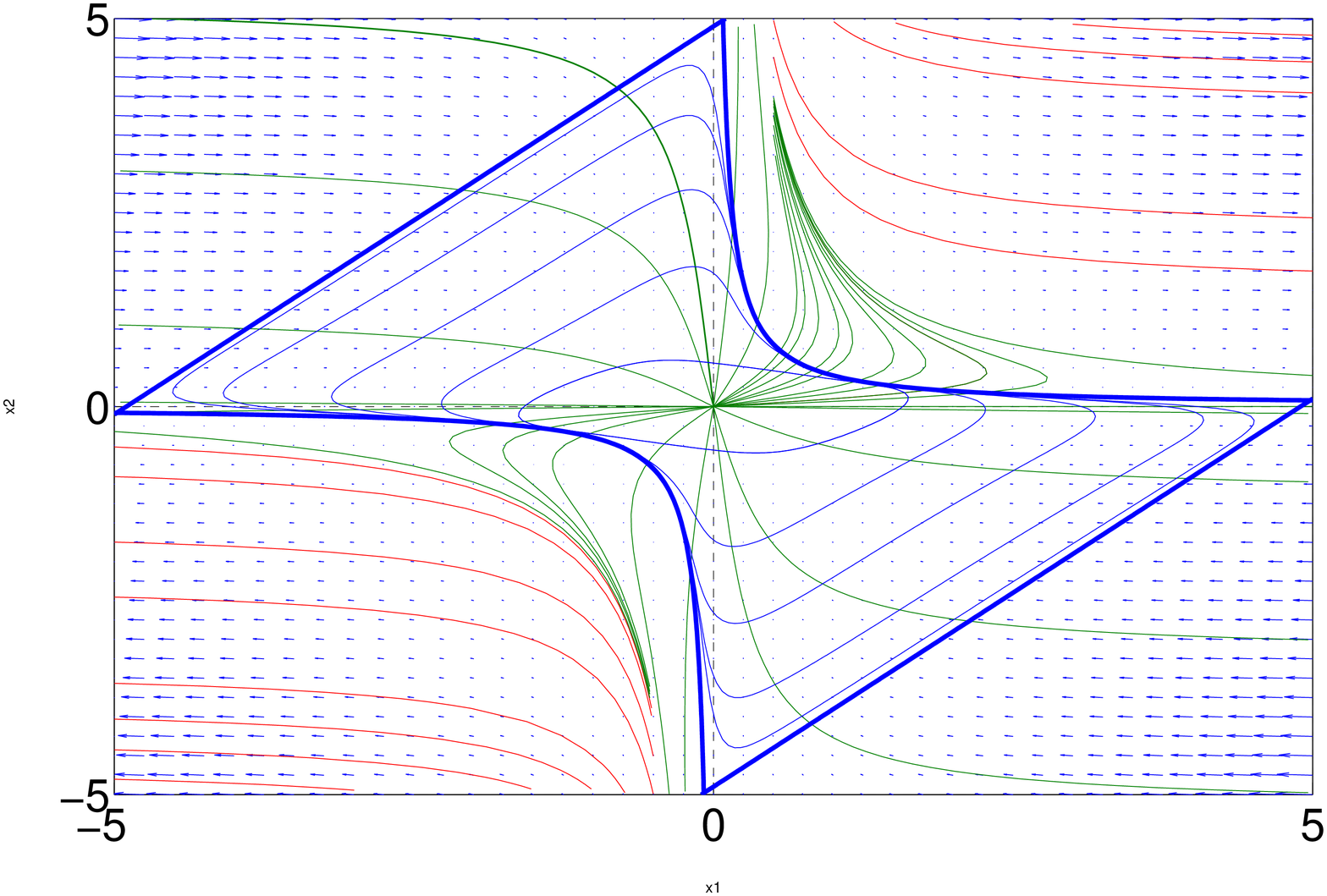}}
\end{psfrags}
\caption{The boundary of the ERA, $\partial\cE(R,\gamma)$, is depicted in dark blue. Trajectories obtained with~\eqref{eq:traj2} are depicted in green (converging) and red (diverging).
\label{fig:maxRi}}
\end{figure}

\section{Conclusion}
\label{sec:conclusion}
In this paper we presented conditions for a positively invariant set to be an ERA of the origin and for Lyapunov certificates given by quotient of two functions where the denominator characterizes the ERA. The main feature of our results is that the positively invariant set defining the ERA  is  \textit{not} necessarily a level set of a Lyapunov function. Provided a stronger condition is satisfied we obtain a Lyapunov function interpretation of the ERA, which for polynomial systems is a rational function. We subsequently proposed an algorithm for the estimation of the RA that guarantees the increment of the estimate at each iteration. 

We then applied the algorithm to the class of polynomial vector fields and semi-algebraic sets for which the steps are performed via the solution to Sum-of-Squares programmes. The extension of the results to the class of systems with parametric uncertainties was also presented. For this case, the positively invariant ERA does not depend on the uncertain parameters while the associated Lyapunov function can.

\bibliographystyle{plain}  
\bibliography{biblio}    
\end{document}